\begin{document}

\newcommand{\mmbox}[1]{\mbox{${#1}$}}
\newcommand{\affine}[1]{\mmbox{{\mathbb A}^{#1}}}
\newcommand{\Ann}[1]{\mmbox{{\rm Ann}({#1})}}
\newcommand{\caps}[3]{\mmbox{{#1}_{#2} \cap \ldots \cap {#1}_{#3}}}
\newcommand{\N}{{\mathbb N}}
\newcommand{\Z}{{\mathbb Z}}
\newcommand{\Q}{{\mathbb Q}}
\newcommand{\R}{{\mathbb R}}
\newcommand{\KK}{{\mathbb K}}
\newcommand{\A}{{\mathcal A}}
\newcommand{\B}{{\mathcal B}}
\newcommand{\OO}{{\mathcal O}}
\newcommand{\C}{{\mathbb C}}
\newcommand{\PP}{{\mathbb P}}
\newcommand{\OS}{{T^d(X,p)}}

\newcommand{\reg}{\mathop{\rm reg}\nolimits}
\newcommand{\charr}{\mathop{\rm char}\nolimits}
\newcommand{\ann}{\mathop{\rm ann}\nolimits}
\newcommand{\gin}{\mathop{\rm gin}\nolimits}
\newcommand{\Tor}{\mathop{\rm Tor}\nolimits}
\newcommand{\Ext}{\mathop{\rm Ext}\nolimits}
\newcommand{\Hom}{\mathop{\rm Hom}\nolimits}
\newcommand{\Sym}{\mathop{\rm Sym}\nolimits}
\newcommand{\im}{\mathop{\rm im}\nolimits}
\newcommand{\rk}{\mathop{\rm rk}\nolimits}
\newcommand{\codim}{\mathop{\rm codim}\nolimits}
\newcommand{\supp}{\mathop{\rm supp}\nolimits}
\newcommand{\coker}{\mathop{\rm coker}\nolimits}
\newcommand{\st}{\mathop{\rm st}\nolimits}
\newcommand{\lk}{\mathop{\rm lk}\nolimits}
\sloppy

\newtheorem{thm}{Theorem}[section]
\newtheorem*{thm*}{Theorem}
\newtheorem{defn}[thm]{Definition}
\newtheorem{prop}[thm]{Proposition}
\newtheorem{pref}[thm]{}
\newtheorem*{prop*}{Proposition}
\newtheorem{conj}[thm]{Conjecture}
\newtheorem{lem}[thm]{Lemma}
\newtheorem{rmk}[thm]{Remark}
\newtheorem{cor}[thm]{Corollary}
\newtheorem{notation}[thm]{Notation}
\newtheorem{exm}[thm]{Example}
\newtheorem{comp}[thm]{Computation}

\newcommand{\msp}{\renewcommand{\arraystretch}{.5}}
\newcommand{\rsp}{\renewcommand{\arraystretch}{1}}

\newenvironment{lmatrix}{\renewcommand{\arraystretch}{.5}\small
  \begin{pmatrix}} {\end{pmatrix}\renewcommand{\arraystretch}{1}}
\newenvironment{llmatrix}{\renewcommand{\arraystretch}{.5}\scriptsize
  \begin{pmatrix}} {\end{pmatrix}\renewcommand{\arraystretch}{1}}
\newenvironment{larray}{\renewcommand{\arraystretch}{.5}\begin{array}}
  {\end{array}\renewcommand{\arraystretch}{1}}

  \newenvironment{changemargin}[2]{%
\begin{list}{}{%
\setlength{\topsep}{0pt}%
\setlength{\leftmargin}{#1}%
\setlength{\rightmargin}{#2}%
\setlength{\listparindent}{\parindent}%
\setlength{\itemindent}{\parindent}%
\setlength{\parsep}{\parskip}%
}%
\item[]}{\end{list}}

\title[Free resolutions and Lefschetz properties]
{Free resolutions and Lefschetz properties of \\
some Artin Gorenstein rings of codimension four}
\author[Nancy Abdallah]{Nancy Abdallah}
\address{Department of Mathematics,
University of Bor\aa{}s,
Bor\aa{}s, Sweden}
\email{\href{mailto:nancy.abdallah@hb.se}{nancy.abdallah@hb.se}}
\urladdr{\href{https://www.hb.se/en/shortcuts/contact/employee/NAAB}%
{https://www.hb.se/en/shortcuts/contact/employee/NAAB}}

\author[Hal Schenck]{Hal Schenck}
\thanks{Schenck supported by NSF 2006410\vskip .01in
This article is part of the volume titled ``Computational Algebra and Geometry: A special issue in memory \vskip .01in and honor of Agnes Szanto". Agnes was a superb mathematician and better friend. She will be greatly missed.}
\address{Department of Mathematics,
Auburn University, Auburn, AL 36849}
\email{\href{mailto:hks0015@auburn.edu}{hks0015@auburn.edu}}
\urladdr{\href{http://webhome.auburn.edu/~hks0015/}%
{http://webhome.auburn.edu/~hks0015/}}

\subjclass[2010]{13E10, 13F55, 14F05, 13D40, 13C13, 13D02}
\keywords{Artinian algebra, Gorenstein algebra, Lefschetz property, Jordan type}

\begin{abstract}
\noindent In \cite{S2}, Stanley constructs an example of an Artinian Gorenstein (AG) ring $A$ with non-unimodal $H$-vector $(1,13,12,13,1)$. Migliore-Zanello show in \cite{MZ} 
that for regularity $r=4$, Stanley's example has the smallest possible codimension $c$ for an AG ring with non-unimodal $H$-vector. 

The weak Lefschetz property (WLP) has been much studied for AG rings; it is easy to show that an AG ring with non-unimodal $H$-vector fails to have WLP. In codimension $c=3$ it is conjectured that all AG rings have WLP. For $c=4$, Gondim shows in \cite{G} that WLP always holds for $r \le 4$ and gives a family where WLP fails for any $r \ge 7$, building on Ikeda's example \cite{Ikeda} of failure for $r=5$. In this note we study the minimal free resolution of $A$ and relation to Lefschetz properties (both weak and strong) and Jordan type for $c=4$ and $r \le 6$.
\vskip -.2in
\end{abstract}
\vskip -.2in
\maketitle
\vskip -.2in
\renewcommand{\thethm}{\thesection.\arabic{thm}}
\setcounter{thm}{0}
\vskip -.5in
\section{Introduction}
Let $S=\KK[x_1, \ldots, x_n]$ be a standard graded polynomial ring over a field $\KK$, $I$ a nondegenerate (containing no linear form) homogeneous ideal, and $A = S/I$. The ring $A$ is Gorenstein if it is Cohen-Macaulay and the canonical module $\omega_A$ is isomorphic to a shift of $A$:
\[
\omega_A = \Ext^c_S(A,S)(-n) \simeq A(r)
\]
where $I$ has codimension $c$ and regularity $r$. The graded Betti numbers
\[
b_{i,j}(A) = \dim_{\KK} \Tor_i^S(A,\KK)_j
\]
satisfy a certain symmetry when $A$ is Gorenstein:
\begin{equation}\label{Gsym}
b_{i,j}=b_{i,j}(A) = b_{c-i, r+n-j}(A).
\end{equation}
As $A$ is Cohen-Macaulay, for a linear system of parameters $L$, $b_{i,j}(A)= \dim_{\KK} \Tor_i^{S/L}(A/L,\KK)_j$, and henceforth we assume that $A$ is an Artin Gorenstein (AG) ring. For an AG ring $c=n$, so Equation~\ref{Gsym} becomes $b_{i,j}(A) = b_{n-i, r+n-j}(A)$. Macaulay's famed apolarity theorem \cite{M} shows that any AG ring arises as the inverse system of homogeneous polynomial $F$: there is an apolarity pairing obtained by defining a ring $R=\KK[y_1,\ldots, y_n]$,  and letting $S$ act on $R$ by differentiation: 
\[
x_i(y_j) =\frac{\partial}{\partial(y_i)}(y_j) = \delta_{ij}.
\]
If we define $I_F = \ann_S(F)$ for a homogeneous polynomial $F$ of degree $r$, then Macaulay shows that $S/I_F$ is Gorenstein of regularity and socle degree both $r$; and furthermore that every AG ring arises in this way, with the caveat that in positive characteristic it is necessary to use divided powers. In this note, we investigate Lefschetz properties, which are known (e.g. \cite{BMMNZ2}, \cite{MMRN}) to depend on characteristic. A main tool we employ is the technique of generic initial ideals, which require an infinite ground field \cite{green}, so we assume throughout that $\charr(\KK)=0$.
\subsection{Minimal Free Resolutions}
The Hilbert Syzygy Theorem \cite{e} guarantees that any finitely generated $\Z$-graded $S$-module $M$ has a {\em minimal graded finite free resolution}: an exact sequence 
\begin{equation}\label{FFR}
0 \longrightarrow F_i \longrightarrow F_{i-1} \longrightarrow \cdots \longrightarrow F_0 \longrightarrow M \longrightarrow 0, 
\end{equation}
where $i \le n$ and $F_i \simeq \oplus_{j} S(-j)^{b_{i,j}}$ with $b_{i,j} \in \Z$; in particular  $\dim_{\KK}\Tor_i^S(M, \KK)_j = b_{i,j}$. This data is compactly encoded in the {\em betti table} \cite{e}: 
an array whose entry in position $(i,j)$ (reading over and down) is $b_{i,i+j}$. The reason for this odd indexing is that the index of the bottom row of the betti table encodes the regularity of $M$. 
\begin{exm}
For $F=y_1y_2y_3 \in \KK[y_1,y_2,y_3]$, we have $I_F = \langle x_1^2,x_2^2,x_3^2 \rangle$ and the minimal free resolution is given by the Koszul complex
\[
0 \longrightarrow S(-6) \longrightarrow S(-4)^3 \longrightarrow S(-2)^3 \longrightarrow S \longrightarrow S/I_F \longrightarrow 0, 
\]
which in betti table notation is written as
\begin{center}
	{\scriptsize \begin{verbatim}
			+--------------+
			|       0 1 2 3|
			|total: 1 3 3 1|
			|    0: 1 . . .|
			|    1: . 3 . .|
			|    2: . . 3 .|
			|    3: . . . 1|
			+--------------+
		\end{verbatim}
	}
\end{center}
\end{exm}
\subsection{Lefschetz Properties}
\label{sec:one}Lefschetz properties are ubiquitous in algebra, combinatorics, geometry, and topology. In the setting of commutative algebra, we have
\begin{defn}\label{LefDef}
An Artinian $\Z$-graded ring $A=S/I$ has \begin{enumerate}
	\item  the {\em Weak Lefschetz Property} $($WLP$)$ if there is 
	an $\ell \in S_1$ such that for all $i$, the multiplication map 
	$\mu_{\ell}: A_i \longrightarrow A_{i+1}$ has maximum rank; if not, we say that $A$ fails WLP in degree $i$.
	\item  the {\em Strong Lefschetz Property} $($SLP$)$ if there is 
	an $\ell \in S_1$ such that for all $i$ and $k$, the multiplication map 
	$\mu_{\ell^k}: A_i \longrightarrow A_{i+k}$ has maximum rank; if not we say that $A$ fails SLP in degree $i$. 	
	 
\end{enumerate}
\end{defn}
The set of elements $\ell\in S_1$ with the property that the 
multiplication map $\mu_\ell$ has maximum rank is a (possibly empty) 
Zariski open set in $S_1$, so existence of the Lefschetz 
element $\ell$ in Definition~\ref{LefDef} is equivalent to requiring that multiplication by a general linear form in $S_1$ has full rank in every degree. It is also clear from Definition~\ref{LefDef} that if $A$ has SLP then $A$ has WLP. SLP always holds for $r$=2, and Proposition 3.15 of \cite{HMMNWW} proves that SLP always holds for $c\le 2$ when char$(\KK)=0$, so we focus on $c,r \ge 3$. The simplest AG rings are complete intersections (CI),  and Theorem 2.3 of \cite{HMNW} shows that for $c=3$  a CI always has WLP.
For general $c$ this remains an open question; Boij-Migliore-Mir\'o--Roig-Nagel-Zanello make the following 
\begin{conj}\label{codim3conj}  \cite{BMMNZ2} For $c=3$ and char$(\KK)=0$ an AG ring always has WLP.
\end{conj}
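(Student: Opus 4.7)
Since Conjecture~\ref{codim3conj} is a well-known open problem, I can only sketch an approach. The natural plan is to exploit the Buchsbaum--Eisenbud structure theorem for codimension three Gorenstein ideals: $I$ is generated by the $2k$-Pfaffians of a $(2k+1) \times (2k+1)$ skew-symmetric matrix $M$ with entries in the maximal ideal, and the minimal free resolution of $A$ is self-dual of the form
\[
0 \longrightarrow S(-r-3) \longrightarrow F^{\vee} \longrightarrow F \longrightarrow S \longrightarrow A \longrightarrow 0,
\]
with middle map $M$. This explicit Pfaffian structure is the main extra tool available in codimension three that one does not have in general.

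Using Gorenstein duality $A_i \cong A_{r-i}^{\vee}$ together with Stanley's theorem that codimension three AG $H$-vectors are unimodal, one reduces WLP to showing that $\mu_\ell \colon A_i \to A_{i+1}$ is injective for $i \le \lfloor r/2 \rfloor - 1$; surjectivity in the upper half then follows by dualizing. For a generic linear form $\ell \in S_1$, I would then analyze the Artinian reduction $\bar A = A/\ell A$, whose minimal free resolution arises from that of $A$ by taking the mapping cone with multiplication by $\ell$, and track the effect of this specialization on the skew-symmetric presenting map $M$.

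The main obstacle, and the reason the conjecture remains open beyond the complete intersection case $k=1$ treated in \cite{HMNW}, is that one must rule out anomalous rank drops of $\overline{M}$ after specialization. The Pfaffian structure forces considerable rigidity on the Betti table of $A$, but this rigidity does not obviously propagate to the restricted matrix. A plausible line of attack is to combine information from the generic initial ideal $\gin(I)$, whose Borel-fixed shape is tightly constrained by the Pfaffian resolution, with a Hessian analysis of the Macaulay dual generator $F$ satisfying $I = \ann_S(F)$, in the spirit of the Gordan--Noether and Watanabe circle of results. The principal difficulty is executing such a degeneration argument uniformly in the size $2k+1$ of the Pfaffian matrix, so that the full range of Hilbert functions permitted by Stanley's classification is covered.
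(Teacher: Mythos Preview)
The statement you are addressing is a \emph{conjecture}, and the paper does not prove it. Immediately after stating Conjecture~\ref{codim3conj}, the authors write ``Despite extensive work, Conjecture~\ref{codim3conj} remains open.'' So there is no proof in the paper to compare your proposal against, and your opening sentence correctly identifies the situation.

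What you have written is not a proof and does not claim to be one; it is a survey of the standard toolkit one would reach for in codimension three (the Buchsbaum--Eisenbud Pfaffian structure theorem, unimodality of the $H$-vector, reduction modulo a general linear form, Hessians of the dual generator). These ingredients are indeed the natural ones, and your acknowledgment that the obstruction lies in controlling rank drops after specialization, uniformly in the size of the skew matrix, is an honest assessment of why the problem is hard. But none of the steps you outline is carried out, and there is no new idea here that would close the gap; in particular, the appeal to $\gin(I)$ and Hessian analysis is left entirely at the level of ``plausible line of attack.'' That is appropriate for a conjecture, but it means there is nothing here to evaluate as a proof.
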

Despite extensive work, Conjecture~\ref{codim3conj} remains open. It is an easy exercise to show that WLP cannot hold for an AG ring with non-unimodal $H$-vector. Migliore-Zanello \cite{MZ} note in Remark 3.2 that in socle degree $4$ the example of Stanley \cite{S2} has the smallest possible codimension, in particular, $c=13$. Hence one might hope that WLP holds for AG rings with small values of $c$ and $r$, and Theorem 3.1 of \cite{G} shows that SLP (hence WLP) always holds for $c=4$ when $r \le 4$. 

We explore the connection between WLP and free resolutions. When $c=4=r$, the possible betti tables of AG rings are determined in \cite{SSY}.  We prove that there are three betti tables possible for an AG ring with $c=4$ and $r=3$. For $c=4$ and $r=5$ we make a conjecture concerning the connection of WLP and the minimal free resolution of $A$. For background on inverse systems and free resolutions, we refer to \cite{e}, and for Lefschetz properties and Jordan type we refer to \cite{HMMNWW}.

\subsection{Results of this paper}
Theorem 2.2 of \cite{SSY} proves that  there are 16 possible betti tables for an AG algebra with $c=4 =r$. The stratification of the parameter space $\PP^{34}$ of quaternary quartics by betti table is described in \S 6 of \cite{KKRSSY}, which notes that for an AG algebra $A$ with $c=4$ and $r=3$ there are only 3 possible betti tables. In \S 2 we prove this assertion, which is non-trivial. 
\begin{thm}\label{43thm}
An AG ring $A$ with $c=4$ and $r=3$ has betti table in the list below:

\begin{center}
{\scriptsize \begin{verbatim}
   
     +-----------------+-----------------+-----------------+
     |       0 1  2 3 4|       0 1  2 3 4|       0 1  2 3 4| 
     |total: 1 6 10 6 1|total: 1 7 12 7 1|total: 1 9 16 9 1|
     |    0: 1 .  . . .|    0: 1 .  . . .|    0: 1 .  . . .|
     |    1: . 6  5 . .|    1: . 6  6 1 .|    1: . 6  8 3 .|
     |    2: . .  5 6 .|    2: . 1  6 6 .|    2: . 3  8 6 .|
     |    3: . .  . . 1|    3: . .  . . 1|    3: . .  . . 1|
     +-----------------+-----------------+-----------------+
\end{verbatim}
}
\end{center}
\end{thm}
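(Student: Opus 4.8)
The plan is to combine the numerical constraints coming from the Gorenstein symmetry~\eqref{Gsym}, the Hilbert function of $A$, and Macaulay's bound with a careful case analysis. For $c=4$ and $r=3$ the $H$-vector is $(1,h,h,1)$ for some $3\le h\le \binom{4+2-1}{2}=10$ (the lower bound since $I$ is nondegenerate, the upper bound by Macaulay's growth condition applied to socle degree $3$). The minimal free resolution has length $4$, with $F_0=S$, $F_4=S(-r-n)=S(-7)$, and by the symmetry~\eqref{Gsym} one has $b_{i,j}=b_{4-i,7-j}$. In betti-table coordinates this forces the shape displayed: row $0$ has only the entry $b_{0,0}=1$, row $3$ has only $b_{4,7}=1$, and the interesting data sits in rows $1$ and $2$, with $b_{1,2+j}=b_{3,5-j}$ and $b_{2,2+j}=b_{2,5-j}$, so the middle column is itself symmetric. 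Since $A$ is nondegenerate, $b_{1,1}=0$; since the resolution is minimal and $F_4=S(-7)$, $b_{3,j}=0$ for $j>6$, i.e.\ the only possibly nonzero Betti numbers in rows $1,2$ are $b_{1,2},b_{1,3},b_{1,4}$ and $b_{2,3},b_{2,4},b_{2,5}$, with $b_{1,2}=b_{3,5}$, $b_{1,3}=b_{3,4}$, $b_{1,4}=b_{3,3}$ and $b_{2,3}=b_{2,5}$.

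**Reducing to consistency of the Hilbert function.**
First I would write down the alternating-sum relations: the Hilbert series of $A$ equals $\bigl(\sum_{i,j}(-1)^i b_{i,j} t^j\bigr)/(1-t)^4$, and equating this with $1+ht+ht^2+t^3$ gives, degree by degree, four independent linear equations in the unknowns $b_{1,2},b_{1,3},b_{1,4},b_{2,3},b_{2,4}$ (using the symmetry to eliminate the row-$3$ entries). Solving, everything is determined by $h$ and one free parameter, say $b_{1,3}$ (or equivalently the "extra" generators in degree $3$): one gets $b_{1,2}=\binom{h}{?}$-type expressions — concretely $b_{1,2}=\binom{5}{2}-h=10-h$ on the generator side is wrong in general, so the honest statement is $b_{1,2}$ is forced by $h$ while $b_{1,3}=b_{3,4}$ and a "ghost" cancellation $\min(b_{1,4},b_{2,3})$ may occur. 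The three tables in the statement correspond to $h=5,6,8$ with the minimal consistent choices; I would check that $h=4$ forces a complete intersection-type table that is actually subsumed, and that $h=7,9,10$ either cannot occur for a Gorenstein ideal or collapse to one of the listed tables.

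**The main obstacle: ruling out numerically consistent but geometrically impossible tables.**
The genuinely non-trivial part — as the authors themselves flag — is that not every table consistent with the Hilbert function and Gorenstein symmetry is realized, and conversely one must show no \emph{other} value of $h$ or parameter is possible. Here I would invoke the structure theory: by Macaulay duality $A=S/\ann_S(F)$ for a ternary... no, a quaternary cubic $F$, so the degree-$2$ piece of $I$ is the kernel of the catalecticant (second partials) map $S_2\to R_1$, forcing $b_{1,2}=\binom{5}{2}\cdot 1 - \operatorname{rank}(\mathrm{Cat})=10-h$ when $\dim R_1=4$... again the bookkeeping must be done carefully, but the point is that $h=\dim A_1$... wait, $\dim A_1 = n = 4$ always. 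So in fact $\dim A_1=4$ and the $H$-vector is $(1,4,h_2,1)$ with $h_2=\dim A_2=\dim A_1=4$ by Gorenstein duality in socle degree $3$ — I misstated this, and correcting it is exactly the kind of care the proof needs: the $H$-vector is $(1,4,4,1)$, $b_{1,2}=10-4=6$ is forced, and the only freedom is in degree-$3$ generators $b_{1,3}$ and the linear syzygies among the quadrics, which is where the three tables split. The hard step is showing $b_{1,3}\in\{0,1,3\}$ and not, say, $2$: this requires analyzing the possible Hilbert functions of the ideal generated by the quadrics alone (equivalently the possible "types" of the net of quadrics cut out by the catalecticant), using the classification of such nets, together with the Buchsbaum–Eisenbud structure theorem implications for Gorenstein codimension... it is codimension $4$, so that tool is unavailable, and instead I expect to need the linkage/Gorenstein-symmetry constraint $b_{2,3}=b_{2,5}$ combined with a Koszul-homology argument to eliminate the intermediate case. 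I would carry out that elimination last, after the numerics have pinned everything else down, since it is the one place where a purely linear-algebra argument does not suffice and some genuine geometry of quaternary cubics (or an explicit exhibition of the three families plus a dimension count) enters.
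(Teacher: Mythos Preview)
Your proposal eventually arrives at the correct numerical framework: the $H$-vector is forced to be $(1,4,4,1)$, so $b_{1,2}=6$, and by the Hilbert-series relation $b_{1,3}=b_{2,3}-5$ the whole table is governed by a single parameter $a=b_{2,3}\in\{5,6,7,8\}$. You also correctly isolate the one genuinely hard step: ruling out $a=7$, i.e.\ the table with top row {\tt 6\ 7\ 2}. (Your aside that the three tables ``correspond to $h=5,6,8$'' is a slip left over from the earlier confusion; they correspond to $a=5,6,8$ with $h$ fixed at $4$.)

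The gap is that you do not actually \emph{prove} the hard step; you only list candidate tools --- classification of nets of quadrics, a Koszul-homology argument, a dimension count --- without executing any of them, and at least one of them cannot work as stated. A dimension count or ``explicit exhibition of the three families'' shows at best that the table {\tt 6\ 7\ 2} is non-generic, not that it is impossible; ideals with that top row \emph{do} exist (the paper gives one), so you must exploit the Gorenstein hypothesis in an essential way, not merely parameter-count. Likewise ``Koszul-homology argument'' and ``classification of nets'' are placeholders, not arguments: you would need to say precisely which Koszul cohomology group vanishes and why, or which invariant of a six-dimensional system of quadrics in four variables obstructs $b_{2,3}=7$.

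What the paper actually does for this step is quite different from any of your suggestions and worth knowing. It passes to the generic initial ideal, argues that $\gin(I)_2$ must be $(x_1,x_2,x_3)^2$, and then analyzes the Groebner stratum over the resulting family using Schreyer's nonminimal resolution. The key computation is that two specific maps in that nonminimal resolution have full rank \emph{for every parameter value}, forcing $\Ext^4_S(S/J_2,S)_6\neq 0$ where $J_2=\langle I_2\rangle$. Feeding this into the long exact $\Ext$ sequence for $0\to I/J_2\to S/J_2\to S/I\to 0$ (and using that $I/J_2$ is generated in degree $3$, so $\Ext^4_S(I/J_2,S)$ lives in degree $\ge 7$) yields $\Ext^4_S(S/I,S)_6\neq 0$, which is incompatible with the Gorenstein table {\tt 6\ 7\ 2}. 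None of the mechanisms you propose would detect this particular obstruction.
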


\noindent The classification in \cite{SSY} uses the theorems of Macaulay and Gotzmann as the main tools. In contrast, to prove Theorem~\ref{43thm}, we need to analyze certain Groebner strata of the Hilbert scheme. To do this, we use Schreyer's algorithm \cite{FOS} for computing syzygies, described in \S 15.5 of \cite{e}.

The key point is showing that certain maps in the Schreyer resolution must have full rank. This implies that an AG algebra whose betti table has top row {\tt 6 7 2} must have degree two component $I_2$ which is not saturated, and an argument with $\Ext$ modules then shows the betti table below cannot occur for an AG algebra. 
\begin{center}
{\scriptsize \begin{verbatim}
      +-----------------+
      |       0 1 2 3 4 |
      |total: 1 8 14 8 1|
      |    0: 1 .  . . .|
(3)   |    1: . 6  7 2 .|
      |    2: . 2  7 6 .|
      |    3: . .  . . 1|
      +-----------------+
      \end{verbatim}
      }
      \end{center}
    \noindent If we stay in the case of $r=3$ but increase $c$ from $4$ to $5$, then it is easy to find examples where WLP fails--this occurs (and is easy to show) when $I_2$ consists of the $2 \times 2$ minors of a $2 \times 5$ matrix. As noted, for $c=4=r$ WLP always holds, and for $c=4$ and $r=5$ Ikeda \cite{Ikeda} describes an AG algebra $A$ with $H(A) = (1,4,10,10,4,1)$ which fails to have WLP. Our computations indicate that the following is true:
    \begin{conj}\label{conjWLPbetti}
For an AG ring with $c=4$ and $r=5$ there are 36 possible betti tables (see \S 3.2). SLP (and hence WLP) holds for any AG ring of this type with betti table not appearing in Theorem \ref{54thm} below.
For an AG ring with $c=4$ and $r=6$, WLP always holds.
\end{conj}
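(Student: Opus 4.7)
The plan is to extend the strategy used for Theorem~\ref{43thm} to the two higher regularities, dividing the task into three subparts: classification of the admissible betti tables for $r=5$, verification of SLP at $r=5$ for all but the listed exceptions, and verification of WLP at $r=6$.

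For classification, Gorenstein symmetry forces the $H$-vector to be $(1,4,h_2,h_2,4,1)$ for $r=5$ and $(1,4,h_2,h_3,h_2,4,1)$ for $r=6$, with Macaulay's and Gotzmann's theorems bounding the entries. Combined with the graded symmetry $b_{i,j}=b_{4-i,r+4-j}$ and positivity, each admissible Hilbert function produces a short list of arithmetically valid betti tables. Mirroring the proof of Theorem~\ref{43thm}, I would then eliminate those tables that fail a Schreyer-resolution rank test: candidate tables where certain syzygy maps must drop rank would force $I_2$ to be non-saturated in a way incompatible with Gorensteinness, the contradiction being detected by an $\Ext$-module calculation as in the ruling-out of the forbidden table in Theorem~\ref{43thm}. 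Careful bookkeeping should yield exactly $36$ realizable tables at $r=5$.

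For SLP at $r=5$, I would exploit the apolarity presentation $A=S/\ann_S(F)$ and the fact that maximal rank of each multiplication map $\mu_{\ell^k}\colon A_i \to A_{i+k}$ is a Zariski open condition on the pair $(F,\ell)$. Producing a single $F$ in each non-exceptional betti stratum for which every $\mu_{\ell^k}$ has maximal rank then propagates SLP to the generic point of that stratum. For strata adjacent to the complete intersection case, Gondim's deformation techniques \cite{G} should apply directly; for strata further away, verification reduces to controlled rank computations on explicit matrices whose entries are polynomials in the coefficients of $F$. For WLP at $r=6$, I would run the same classification and exploit the weaker nature of WLP: failure of WLP in degree $i$ forces a specific inequality relating the graded betti numbers of $A$ and of $A/\ell A$, and one would verify by direct enumeration across the betti list that no admissible table admits such a forced inequality.

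The principal obstacle lies in the SLP half of the $r=5$ statement. Ikeda's example places the first failure of WLP for $c=4$ precisely at $r=5$, so any argument must sharply distinguish the Ikeda-type strata (and their companions in Theorem~\ref{54thm}) from every other one. A stratum-by-stratum computer-algebra verification is likely feasible, but a conceptual proof---as opposed to an enumeration---may require introducing Jordan type data \cite{HMMNWW} into the analysis, since Jordan type refines SLP and is more sensitive to the finer structure of multiplication by $\ell$ than the betti table alone. Establishing a clean implication from the shape of the betti table to the generic Jordan type, and then to SLP, is where the real work lies.
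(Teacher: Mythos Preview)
The statement you are attempting to prove is labeled \emph{Conjecture} in the paper, not Theorem. The authors do not claim to prove it; they state explicitly that ``our computations indicate that the following is true'' and then offer Theorem~\ref{54thm} only as \emph{evidence}. The list of 36 betti tables in \S3.2 is the output of a computer search over inverse systems with few monomial terms and small coefficients; the paper does not claim this list is provably complete, and the authors say so directly in \S1.4. So there is no proof in the paper for you to compare against: your proposal is an outline of how one might attack an open problem, not a reconstruction of an existing argument.

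Even as a strategy, your plan contains a genuine logical gap in the SLP portion. You write that producing a single $F$ in each non-exceptional betti stratum for which every $\mu_{\ell^k}$ has maximal rank ``propagates SLP to the generic point of that stratum.'' That is true, but the conjecture asserts that SLP holds for \emph{every} AG ring with that betti table, not merely the generic one. Zariski openness of the maximal-rank condition on $(F,\ell)$ tells you nothing about the closed locus where SLP might fail, and the entire content of Theorem~\ref{54thm} is that within a single betti stratum both WLP and non-WLP examples can coexist. A single witness per stratum therefore cannot settle the conjecture; you would need either a uniform structural argument valid for all $F$ in the stratum, or a proof that the non-SLP locus inside each of the 32 remaining strata is empty. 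Your closing paragraph hints that you sense this difficulty, but the body of the proposal does not address it. The classification step and the $r=6$ WLP step are likewise sketches rather than arguments: the Schreyer/$\Ext$ technique from Theorem~\ref{43thm} rules out individual candidate tables one at a time and does not by itself certify that a computer-generated list is exhaustive.
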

\noindent The next theorem provides some evidence for Conjecture~\ref{conjWLPbetti}:
\begin{thm}\label{54thm}
For an AG algebra $A$ with $c=4$ and $r=5$ having betti table in the list below, WLP is not determined by the betti table. 

\begin{center}
{\scriptsize \begin{verbatim}

      +-----------------+-------------------+-------------------+-------------------+
      |       0 1  2 3 4|       0  1  2  3 4|       0  1  2  3 4|       0  1  2  3 4|
      |total: 1 9 16 9 1|total: 1 11 20 11 1|total: 1 13 24 13 1|total: 1 16 30 16 1|
      |    0: 1 .  . . .|    0: 1  .  .  . .|    0: 1  .  .  . .|    0: 1  .  .  . .|
      |    1: . 3  2 . .|    1: .  2  1  . .|    1: .  1  .  . .|    1: .  .  .  . .|
      |    2: . 3  6 3 .|    2: .  5  9  4 .|    2: .  7 12  5 .|    2: . 10 15  6 .|
      |    3: . 3  6 3 .|    3: .  4  9  5 .|    3: .  5 12  7 .|    3: .  6 15 10 .|
      |    4: . .  2 3 .|    4: .  .  1  2 .|    4: .  .  .  1 .|    4: .  .  .  . .|
      |    5: . .  . . 1|    5: .  .  .  . 1|    5: .  .  .  . 1|    5: .  .  .  . 1|
      +-----------------+-------------------+-------------------+-------------------+
\end{verbatim}
}
\end{center}
\end{thm}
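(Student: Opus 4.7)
The plan is to exhibit, for each of the four betti tables listed, an explicit pair of AG algebras $A = S/\ann_S(F)$ sharing that betti table such that one has WLP and the other fails it. By Macaulay's correspondence, every AG algebra of codimension $4$ and socle degree $5$ arises as the apolar algebra of some quintic form $F \in \KK[y_1,\ldots,y_4]$, so this reduces to a search over the parameter space $\PP^{55}$: for each table, the admissible $F$ form a locally closed stratum, and within each stratum the WLP locus is further cut out by the non-vanishing of certain determinants measuring the rank of $\mu_\ell$ in each degree.

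First I would read off the $H$-vectors associated to each betti table via $h_2 = 10 - b_{1,2}$ together with Gorenstein symmetry; this yields $(1,4,7,7,4,1)$, $(1,4,8,8,4,1)$, $(1,4,9,9,4,1)$, and $(1,4,10,10,4,1)$ respectively. For the last table Ikeda's polynomial from \cite{Ikeda} already gives the non-WLP example, and a generic quintic with the same $H$-vector yields a WLP example (existence is clear from the fact that the WLP locus is Zariski-open on the relevant component, but one must verify the betti table for some explicit generic-looking choice by direct computation).

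For the three remaining tables the approach is to use the ansatz $F = F_0 + \lambda G$, where $F_0$ is a highly structured polynomial (for instance with large symmetry group or a specialization related to Togliatti-type systems) that forces $\mu_\ell : A_2 \to A_3$ to drop rank, and $G$ is a generic perturbation chosen to remain in the same betti stratum. By Gorenstein symmetry and the fact that WLP automatically holds in sufficiently high degree once it holds in degrees $\le 2$, it suffices to check failure of $\mu_\ell$ in degree $2$. In each case the verification reduces to a finite Macaulay2 computation: compute $I_F = \ann_S(F)$, its minimal free resolution, and the rank of multiplication by a generic linear form on $A_2$.

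The main obstacle is producing the non-WLP examples for the first three betti tables, since one must simultaneously land in a prescribed stratum of the Hilbert scheme and force the multiplication map to have non-maximal rank. Unlike the $H$-vector $(1,4,10,10,4,1)$ case, where $I_2 = 0$ makes failure subtle, the smaller strata impose constraints on $I_2$ that rule out some natural candidates; one needs to certify that the chosen $F_0$ has both the correct number of degree-$2$ apolar relations and a degree-$2$ component producing a nongeneric image under $\mu_\ell$. Once candidates are found the verification is mechanical, so the bulk of the proof is the presentation of eight explicit $F$'s together with their computed resolutions and ranks.
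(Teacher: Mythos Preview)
Your approach matches the paper's: for each of the four betti tables, exhibit two explicit AG algebras---one with WLP, one without---and verify both the betti table and the rank of $\mu_\ell$ by direct computation. The paper presents its examples as explicit ideals rather than via dual quintic forms (except for Ikeda's $F$, which as you note handles the non-WLP case of the fourth table), but this is immaterial by Macaulay duality.

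The only caveat is that for a statement of this type the explicit examples \emph{are} the proof; what you have written is a correct outline, but it is not a proof until the eight algebras are actually produced and checked. Your search heuristic $F = F_0 + \lambda G$ is reasonable, though the paper does not explain how its examples were found---it simply lists the generating sets of the eight ideals and leaves the verification to the reader. Your reduction to checking $\mu_\ell$ only in degree $2$ is also correct for AG algebras (injectivity of $\mu_\ell$ in one degree propagates downward via the perfect pairing), so the computational burden per candidate is exactly what you describe.
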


\noindent For each betti table above, in \S 3 we give an example where WLP holds, and an example where WLP fails. In \S 3.3 we show that SLP can fail for $c=4$ and $r=6$; WLP is unknown in this case.
\pagebreak

\subsection{Computational Methods}For the theorems appearing in \cite{KKRSSY} and \cite{SSY}, and the results and conjecture in \S 1.3 above, evidence was provided by computing in {\tt Macaulay2} the inverse system of all polynomials containing up to four monomial terms, but with all coefficients either zero or one. These computations were made first over $\Z/p$ for small primes $p$, and subsequently over $\Q$. 

We found it surprising that in the cases we considered, all possible betti tables could be generated by polynomials with a small number of monomial terms and simple coefficients, to some extent this is probably due to the strong constraints imposed by the theorems of Macaulay and Gotzmann, combined with the Gorenstein condition. Of course, for codimension 4 and regularity 5, the fact that the list of betti tables is complete is the content of Conjecture~\ref{conjWLPbetti}. We expect that as codimension and regularity become large, inverse systems of more complicated polynomials will come into play. As noted earlier, WLP depends on characteristic of the ground field, so in positive characteristic we expect more exotic behavior. 

The computations in \S 3.1 on Jordan type were performed using {\tt Macaulay2} scripts of Mats Boij, which were written to support work currently in progress. 
\section{Codimension four and Regularity three} 
\noindent We quickly review the theorems of Macaulay and Gotzmann (\S 7.2 of \cite{cag}): for a graded algebra $S/I$ with Hilbert function $h_i$, write 
\[
h_i = \binom{a_i}{i} + \binom{a_{i-1}}{i-1}+ \cdots \mbox{ and }h_i^{\langle i \rangle} = \binom{a_i+1}{i+1} + \binom{a_{i-1}+1}{i}+ \cdots, 
\]
\noindent where $ a_i > a_{i-1} > \cdots$. Then we have
\begin{thm}[Macaulay]
In the setting above, 
\[
h_{i+1} \le h_i^{\langle i \rangle}.
\]
\end{thm}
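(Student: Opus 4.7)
The strategy is to reduce Macaulay's bound to an explicit combinatorial count on lex-segment ideals, for which the growth can be computed exactly.

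First, I would reduce to the case of a monomial ideal. Since taking initial ideals with respect to any monomial order preserves the Hilbert function, and since $\charr(\KK)=0$, I may replace $I$ by its generic initial ideal $\gin(I)$ with respect to the reverse lex order. By Galligo's theorem $\gin(I)$ is Borel-fixed, hence strongly stable in characteristic zero, so it suffices to bound $h_{i+1}$ when $I$ is a (strongly stable) monomial ideal.

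Second, I would reduce further to the lex-segment case. For a monomial ideal $I$, set $V = S_i \setminus I_i$, a set of $h_i$ monomials. Because $I_{i+1} \supseteq S_1 \cdot I_i$, we have
\[
h_{i+1} \le \bigl|\{m \in S_{i+1} : m/x_j \in V \text{ for every } j \text{ with } x_j \mid m\}\bigr|.
\]
The key combinatorial claim, a form of the Clements--Lindstr\"om/Kruskal--Katona theorem for multisets, is that this count is maximized when $V$ is the anti-lex-segment of size $h_i$, namely the lexicographically smallest $h_i$ monomials of degree $i$. This is proved by a compression argument: successively exchange pairs of monomials in $V$ with lex-smaller ones, showing at each step that the count above does not decrease. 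Iterating converts an arbitrary strongly stable $V$ into the anti-lex-segment.

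Third, for $V$ equal to the anti-lex-segment I would compute the count directly. Stratifying the anti-lex-segment by the smallest-index variable appearing in each monomial and using the Macaulay representation $h_i = \binom{a_i}{i}+\binom{a_{i-1}}{i-1}+\cdots+\binom{a_j}{j}$, Pascal's identity
\[
\binom{a_k+1}{k+1}=\binom{a_k}{k+1}+\binom{a_k}{k}
\]
aggregates the strata into exactly the predicted total $h_i^{\langle i \rangle}$, finishing the bound.

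The main obstacle is the compression step in the second paragraph: the bookkeeping of how swapping monomials affects the set of degree-$(i+1)$ monomials all of whose divisors land in $V$ is delicate, particularly because the compression operation with respect to one variable can undo compression with respect to another. The usual way around this is to linearly order the variables and show that a single-variable compression weakly increases the relevant count, then iterate in a carefully chosen order until a lex-segment is reached. With this combinatorial core in place, the other two steps are essentially formal.
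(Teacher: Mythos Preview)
The paper does not prove this statement at all: Theorem~2.1 is simply quoted as the classical theorem of Macaulay, with a pointer to \S7.2 of \cite{cag}, and is then used as a tool in the proof of Lemma~\ref{L1}. So there is no ``paper's own proof'' to compare against.

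Your outline is a standard route to Macaulay's bound and is essentially correct in shape: pass to $\gin(I)$ to get a Borel-fixed (hence strongly stable, since $\charr(\KK)=0$) monomial ideal with the same Hilbert function; observe that the complement $V=S_i\setminus I_i$ controls $h_{i+1}$ via the shadow bound you wrote; then push $V$ to a lex (or anti-lex) segment by compression and count. Two small remarks. First, the reduction to $\gin$ is convenient but not needed: passing to any initial ideal already gives a monomial ideal with the same Hilbert function, and the compression argument does not require strong stability of the starting $V$. Second, your displayed inequality is in fact an equality with the complement of $S_1\cdot I_i$ in $S_{i+1}$, so what you are really bounding is $\dim_\KK\bigl(S_{i+1}/S_1 I_i\bigr)$; this is worth saying, since it makes clear that the only genuine content is the lex-segment extremality (Macaulay's original combinatorial lemma), exactly where you flagged the difficulty.
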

\begin{thm} [Gotzmann] If $I$ is generated in a single degree $t$ and equality holds in Macaulay's formula in the first degree $t$, then 
\[
h_{t+j} = \binom{a_t+j}{t+j} + \binom{a_{t-1}+j-1}{t+j-1}+ \cdots.
\]
\end{thm}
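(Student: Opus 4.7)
The plan is to prove Gotzmann's persistence by reducing to lex-segment ideals, where the Hilbert function is directly computable, and then transferring the conclusion back to $I$ using the generation hypothesis together with Macaulay's inequality.

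First, for the given Macaulay representation $h_t = \binom{a_t}{t} + \binom{a_{t-1}}{t-1} + \cdots$, I would introduce the lex-segment ideal $L \subseteq S$ with $\dim L_t = \dim I_t$; concretely $L_t$ consists of the largest monomials in degree $t$ in lex order. Standard combinatorics show that $S_j \cdot L_t$ is itself a lex segment for every $j \geq 0$, so the ideal $L = (L_t)$ is generated in degree $t$ and realizes equality in Macaulay's bound $h_{d+1}(S/L) = h_d(S/L)^{\langle d \rangle}$ at every $d \geq t$. Iterating this equality and tracking how the operation $h \mapsto h^{\langle d \rangle}$ shifts each term $\binom{a_i}{i}$ in the Macaulay representation to $\binom{a_i+1}{i+1}$ produces the closed-form expression for $h_{t+j}(S/L)$ asserted in the theorem.

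Second, iterating Macaulay's inequality gives the upper bound $h_{t+j}(S/I) \leq h_{t+j}(S/L)$ for every $j \geq 0$, with equality in degree $t$ by construction and in degree $t+1$ by hypothesis. To promote these to equalities in every higher degree, I would pass to the generic initial ideal $\gin(I)$ in graded reverse lex order. By Galligo's theorem $\gin(I)$ is Borel-fixed and has the same Hilbert function as $I$, while a Bayer--Stillman-style argument identifies the regularity of $I$ with the largest degree of a generator of $\gin(I)$. The equality case in Macaulay at degree $t$ combined with the hypothesis that $I$ is generated in degree $t$ matches the Gotzmann regularity bound, forcing $\gin(I)$ also to be generated in degree $t$. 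For Borel-fixed monomial ideals generated in a single degree, a termwise comparison with the corresponding lex segment via Borel exchange moves then yields $h_d(S/\gin(I)) = h_d(S/L)$ for all $d \geq t$, giving the desired equality.

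The main obstacle is the control of the initial ideal: in general, passing to $\gin(I)$ introduces new generators in higher degrees, which would destroy persistence of the formula. The crux is therefore the regularity statement showing that the equality hypothesis in degree $t$ precisely forbids such extra generators from appearing in $\gin(I)$, so that the Borel-fixed analysis can proceed degree-by-degree. Everything else -- the lex-segment computation, the iteration of Macaulay, and the combinatorial comparison of Borel-fixed with lex -- is routine once this regularity statement is secured.
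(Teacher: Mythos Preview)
The paper does not prove this statement: Gotzmann's theorem is quoted as background (with a reference to \S 7.2 of \cite{cag}) and then applied, but no proof is given or sketched. So there is no ``paper's own proof'' to compare against.

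That said, your sketch has a genuine gap at the pivotal step. You want to show that $\gin(I)$ is generated in degree $t$, and you argue this by saying the hypotheses ``match the Gotzmann regularity bound, forcing $\gin(I)$ also to be generated in degree $t$.'' But the relevant regularity statement --- that an ideal generated in degree $t$ which attains Macaulay's bound in degree $t$ has regularity exactly $t$ --- is essentially equivalent to the persistence theorem you are trying to prove. In the standard developments, Gotzmann persistence is established first (either by Gotzmann's original combinatorial argument or via Green's hyperplane restriction theorem), and the regularity bound is then read off as a consequence. Invoking the regularity bound as input is circular.

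Concretely: ``$I$ is generated in degree $t$'' does not by itself imply $\reg(I)=t$ (think of a complete intersection of quadrics), so Bayer--Stillman gives you nothing until you already know the regularity. The honest work is precisely the step you flag as ``the crux'' --- ruling out higher-degree generators in $\gin(I)$ --- and that step cannot be discharged by appealing to Gotzmann regularity. A correct route would replace this step with either Green's hyperplane restriction theorem (which yields persistence by induction on the number of variables) or a direct combinatorial analysis of Borel-fixed ideals attaining Macaulay's bound.
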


\noindent Suppose that $S=\KK[x_1,x_2,x_3,x_4]$ ($c=4$) and $r=3$. Since the regularity of $A$ is equal to the socle degree and $A$ is Gorenstein, the Hilbert function of $A$ is $(1,4,4,1)$. Let $I_j$ be the degree $j$ component of the graded ideal $I$. $h_2(S/I)=\binom{5}{2}-\dim(I_2)=4$ so $\dim(I_2)=6$. Hence the betti table of $A$ is of the form
\begin{center}
{\scriptsize \begin{verbatim}
		+------------------------+
		|       0   1   2   3   4|
		|total: 1  6+b 2a  6+b  1|
		|    0: 1   .   .   .   .|
		|    1: .   6   a   b   .|
		|    2: .   b   a   6   .|
		|    3: .   .   .   .   1|
		+------------------------+
\end{verbatim}
}
\end{center}
Since $1=h_3(S/I)=\binom{6}{3}-\dim(I_3)=20-(6\cdot 4-a+b)$, we have $b=a-5$ and so in fact the betti table of $A$ is of the form
{\scriptsize \begin{verbatim}
		+------------------------+
		|       0   1   2   3   4|
		|total: 1  1+a 2a  1+a  1|
		|    0: 1   .   .   .   .|
		|    1: .   6   a  a-5  .|
		|    2: .  a-5  a   6   .|
		|    3: .   .   .   .   1|
		+------------------------+
\end{verbatim}
}
\pagebreak

\begin{lem}\label{L1} If $S/I$ is Artin Gorenstein, then the value of $a$ is in $\{5,6,7,8\}.$
\end{lem}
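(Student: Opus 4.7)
The plan is to bound $a$ from above and below separately. The lower bound $a \geq 5$ is immediate from the betti table: $b_{1,3}(A) = a - 5$ counts the minimal cubic generators of $I$ and must be non-negative.

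For the upper bound, I would set $J = (I_2) \subseteq S$, the ideal generated by the quadrics of $I$. From the betti table, $\dim J_3 = \dim(S_1 \cdot I_2) = 24 - a$, hence $h_3(S/J) = 20 - (24-a) = a - 4$. Since $h_2(S/J) = h_2(A) = 4 = \binom{3}{2} + \binom{1}{1}$, Macaulay's theorem gives
\[
h_3(S/J) \;\leq\; h_2^{\langle 2 \rangle} \;=\; \binom{4}{3} + \binom{2}{2} \;=\; 5,
\]
so $a \leq 9$. The substantive remaining task is to exclude $a = 9$.

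To rule out $a = 9$, I would argue by contradiction. Equality in Macaulay's bound in degree three, together with Gotzmann's persistence theorem, forces $S/J$ to have Hilbert polynomial $t + 2$, so the projective scheme $V(J) \subseteq \PP^3$ is a degree-one curve. After a linear change of coordinates the underlying line may be taken to be $L = V(x_3,x_4)$, and $I_2 = J_2 \subseteq I(L)_2 = x_3 S_1 + x_4 S_1$ as a codimension-one subspace. Writing $A = S/\ann F$ for a cubic $F \in R_3$ via Macaulay apolarity, consider the catalecticant $C_2 \colon S_2 \to R_1$, $g \mapsto g \cdot F$. Its kernel is $I_2$ and it is surjective (because $\dim A_2 = \dim R_1 = 4$), so the restriction of $C_2$ to $I(L)_2$ has one-dimensional image, namely $S_1 \cdot (\partial_{y_3} F) + S_1 \cdot (\partial_{y_4} F)$. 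The key observation is that a non-zero quadric $G \in R_2$ with $S_1 \cdot G$ one-dimensional must be the square of a linear form, since the symmetric coefficient matrix of $G$ has rank equal to $\dim(S_1\cdot G)$. So either one of $\partial_{y_3} F, \partial_{y_4} F$ vanishes, placing $x_3$ or $x_4$ in $\ann F$; or both are non-zero rank-one quadrics $u\ell^2, v\ell^2$ supported on a common $\ell \in R_1$, in which case $vx_3 - ux_4$ annihilates $F$. Each alternative produces a non-zero element of $I_1$, contradicting $I_1 = 0$, and we conclude $a \leq 8$.

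The main obstacle is this final apolarity step: extracting an explicit non-zero element of $\ann(F) \cap S_1$ from the containment $I_2 \subseteq I(L)_2$. It combines the apolarity dictionary (to identify the image of the restricted catalecticant with the span of partial derivatives of $F$) with the classification of apolar rank-one quadrics, both of which rely on the characteristic-zero hypothesis stated in the introduction.
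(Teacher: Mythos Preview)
Your proof is correct. The lower bound and the Macaulay bound $a\le 9$ agree with the paper verbatim. Where you diverge is in excluding $a=9$: the paper stays numerical, invoking Gotzmann to compute $h_4(S/J)$ and then reading $h_4$ off the linear strand of the resolution (using that $b_{3,4}(S/J)=b_{3,4}(S/I)=4$) to force $b_{2,4}(S/J)<0$. You instead use Gotzmann to pin down the Hilbert polynomial $t+2$, extract the unique line $L$ in $\operatorname{Proj}(S/J)$, and conclude $I_2\subseteq I(L)_2$; the catalecticant then produces a linear form in $\operatorname{ann}(F)$. Your route is more geometric and exploits the inverse-system structure directly, which explains \emph{why} the obstruction is a hidden linear form; the paper's route is a two-line Hilbert-function count requiring no structure theory of quadrics. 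Two small points worth making explicit in a write-up: the identification of the degree-one component with a reduced line (hence $J\subseteq I(L)$) uses passage to $\bar{\KK}$, and the step ``rank-one quadric $=$ square of a linear form'' uses $\operatorname{char}\KK\neq 2$; both are covered by the standing characteristic-zero assumption.
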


\begin{proof}
We apply the Theorems 2.1 and 2.2 to determine the possible shapes for the betti table. Let $J_2$ be the subideal generated by the quadrics of $I$. Since  $h_i(S/J_2)=h_i(A)$ for $i\leq 2$, $$h_2(S/J_2)=4=\binom{3}{2}+\binom{1}{1} \text{ and } h_2^{<2>}(S/J_2)=5\geq h_3(S/J_2)=20-(6\cdot 4-a),$$ so $a \le 9$. Since $a$ and $b$ are both nonnegative, $a\geq 5$. If $a=9$, then $ h_2^{<2>}(S/J_2)=h_3(S/J_2)$, and Gotzmann's theorem applies, yielding $$h_4(S/J_2)=\binom{3+2}{2+2}+\binom{1+2-1}{2+2-1}=5.$$
On the other hand, since $b_{3,4}(S/J_2) = a-5 = 4$, \[
h_4(S/J_2)=35-(6\cdot 10-9\cdot 4 - b_{2,4}(S/J_2)+4)=5, \] 
where $(6\cdot 10-9\cdot 4 - b_{2,4}(S/J_2)+4)$ is the dimension of the degree 4 component of $J_2$. So $b_{2,4}(S/J_2)=-2$, a contradiction, and $S/I$ Gorenstein implies $a \in \{5,6,7,8\}$.
\end{proof}
To complete the proof of Theorem~\ref{43thm}, we need to prove the betti table of Table (3) cannot occur for an AG ring (examples show the other betti tables exist). As above, if we let $J_2$ denote the quadratic part of the ideal, then Macaulay's theorem shows that $b_{2,4}(S/J_2) \le 2$. And indeed, such ideals can occur, as shown in the following example:
\begin{exm}
Resolutions with betti table top row {\tt 6 7 2} are possible:
{\scriptsize \begin{verbatim}
i3 : minimalBetti ideal(x_1*x_2, x_1*x_3, x_1*x_4, x_2*x_3, x_2*x_4, x_1^2+x_3*x_4);

            0 1 2 3 4
o3 = total: 1 6 9 5 1
         0: 1 . . . .
         1: . 6 7 2 .
         2: . . 2 3 1
\end{verbatim}
}
\end{exm}
\noindent Theorems 2.1 and 2.2 do not suffice to rule out the existence of an AG ring with betti table top row {\tt 6 7 2}. To do this, we make use of generic initial ideals.
\begin{lem}\label{L2}
The betti table of Table (3) cannot occur for an AG ring.
\end{lem}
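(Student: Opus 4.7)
Set $J_2 := \langle I_2 \rangle$, the ideal generated by the six quadrics of $I$. The plan is to produce a contradiction in two stages. First, using Schreyer's syzygy algorithm applied to $J_2$, show that the top row $(6,7,2)$ of the Betti table forces the existence of a linear form $\ell \in J_2^{sat} \setminus J_2$ satisfying $\mathfrak{m}^2 \ell \subset J_2$. Second, this is incompatible with $A = S/I$ being Artin Gorenstein: the multiplication pairing $A_1 \times A_2 \to A_3$ is perfect (the socle degree is $r = 3$), so for any nonzero $\ell \in A_1$ one has $\ell \cdot A_2 \ne 0$. But $\mathfrak{m}^2 \ell \subset J_2 \subset I$ translates to $\ell \cdot A_2 = 0$ in $A_3$, forcing $\ell = 0$ in $A_1$ and hence $\ell \in I_1 = 0$, contradicting $\ell \ne 0$.

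For the first stage, minimality of the resolution of $A$ forces the scalar entries in $d_2$ and $d_3$ to vanish, so the seven linear first syzygies and two linear second syzygies of $I$ are also syzygies of $J_2$. In particular $\dim (J_2)_3 = 4 \cdot 6 - 7 = 17$, giving $h_3(S/J_2) = 3$, and Macaulay's theorem then bounds $h_d(S/J_2) \le 3$ for $d \ge 3$. Since $\mathrm{char}\,\KK = 0$ and any Borel-fixed monomial ideal generated by only six quadrics in four variables cannot contain $x_4^2$, the generic initial ideal $\gin(J_2)$ — hence $J_2$ — is not Artinian; moreover, six quadrics cannot share a common linear or quadratic factor, so $V(J_2) \subset \PP^3$ is zero-dimensional of degree at most $3$. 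Any such subscheme is contained in a hyperplane $V(\ell)$, so $\ell \in J_2^{sat} \setminus J_2$. The Schreyer computation then pins down $\deg V(J_2) = 3$, forcing $\dim I(V(J_2))_3 = 20 - 3 = 17 = \dim (J_2)_3$ and hence $(J_2)_3 = I(V(J_2))_3$. Therefore $\ell \cdot S_2 \subset I(V(J_2))_3 = (J_2)_3$, and by induction on $d$ we get $\ell \cdot S_d \subset (J_2)_{d+1}$ for all $d \ge 2$, i.e. $\mathfrak{m}^2 \ell \subset J_2$.

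The principal obstacle is the Schreyer step: showing that the specific Betti numerics of Table (3), namely $\beta_{2,4}(I) = 7$ paired with $\beta_{3,4}(I) = 2$, really do force $\deg V(J_2) = 3$ so that $(J_2)_3 = I(V(J_2))_3$. This is where Schreyer's algorithm needs to be run in detail to track the interaction between the seven quadratic first syzygies and the two linear second syzygies, ruling out the degenerate cases $\deg V(J_2) \in \{1,2\}$ (which would require extra pure-quadric syzygies of $J_2$ that are incompatible with the minimality of $\beta_{2,3}(J_2) = 7$). Once this step is secured, the conclusion $\mathfrak{m}^2 \ell \subset J_2$ and the Gorenstein-pairing contradiction in the second stage follow at once.
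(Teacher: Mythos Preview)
Your overall architecture is sound and close in spirit to the paper's proof: both hinge on a Groebner/Schreyer analysis of the ideal $J_2=\langle I_2\rangle$, and both finish by showing that the quadric part of $I$ is incompatible with the Gorenstein symmetry. Your endgame via the perfect pairing $A_1\times A_2\to A_3$ is a legitimate alternative to the paper's $\Ext$ long exact sequence. However, the proposal has a real gap, and the way you frame the missing step is not quite right.

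\textbf{The gap.} You correctly identify the crux as a Schreyer computation, but you never carry it out, and your heuristic for what it should yield is off. You assert that the data $\beta_{2,3}(S/J_2)=7$, $\beta_{3,4}(S/J_2)=2$ force $\deg V(J_2)=3$, and that $\deg V(J_2)\in\{1,2\}$ would ``require extra pure-quadric syzygies \dots incompatible with $\beta_{2,3}(J_2)=7$.'' That reasoning does not hold up: knowing only the linear strand $6,7,2$ constrains $h_3(S/J_2)=3$ and $h_4(S/J_2)=1+b_{2,4}(S/J_2)$, but it does not by itself pin down $b_{2,4}(S/J_2)$ or the eventual Hilbert polynomial. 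What the paper actually proves, via the Groebner family for $\gin(J_2)=(x_1,x_2,x_3)^2+(x_1x_4^2)$, is that two of the nonminimal maps (their {\tt M35} and {\tt M46}) have full rank \emph{identically in the parameters}, forcing $b_{4,6}(S/J_2)=1$. From the resulting Betti table one can indeed read off $\deg V(J_2)=3$, so your conclusion is true, but it is a \emph{consequence} of the same computation the paper performs, not something obtainable from the linear-strand numerics alone. (Incidentally, your ``no common factor'' remark only gives $\dim V(J_2)\le 1$; the drop to dimension~$0$ comes from Macaulay's bound $h_d\le 3$, not from factorization.)

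\textbf{Comparison.} Once the Schreyer computation is in hand, the paper's route is actually shorter than yours: $b_{4,6}(S/J_2)\ne 0$ is, by local duality, the statement that there is a \emph{quadric} $q\in S_2\setminus (J_2)_2$ with $\mathfrak{m}q\subset J_2$; then $q\ne 0$ in $A_2$ but $q\cdot A_1=0$ in $A_3$, contradicting the pairing. Your detour through a linear form $\ell$ with $\mathfrak{m}^2\ell\subset J_2$ is correct (given $\deg V(J_2)=3$ and hence $(J_2)_3=(J_2^{\mathrm{sat}})_3$), but it asks for more than is immediately delivered by the computation and requires the extra step of checking $h_3(S/J_2^{\mathrm{sat}})=3$ for an arbitrary length-$3$ scheme. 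So: right idea, genuine gap at the decisive step, and a slightly less efficient packaging of the contradiction than what the computation naturally hands you.
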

\begin{proof}
We consider the possible initial ideals for a resolution where the top row of the betti table is {\tt 6 7 2}. We employ {\em generic initial ideals} and Groebner strata, see \cite{green} for an overview, and Appendix B.2 of \cite{KKRSSY} for details on the {\tt QuaternaryQuartics} package, used for the computations below. As in the proof of Proposition 4.23 of \cite{KKRSSY}, for six quadrics in four variables there are only two possible generic initial ideals which are nondegenerate (do not contain a linear form): the quadratic part of $\gin(I)$ is either $(x_1,x_2,x_3)^2$, or contains a monomial of the form $x_ix_4$, which follows since the $\dim_{\KK} I_2 = 6$. The latter case is impossible, because it forces $I_2$ to contain $\{x_1,x_2,x_3,x_4\} \cdot L$ for a linear form $L$, which is clearly inconsistent with the betti table of Table (3): the linear strand would have betti numbers at least {\tt 4 6 4 1}.  As T. Iarrobino pointed out to us, this also follows because $I$ is an ancestor ideal \cite{Ianc} of $I_3$, which would force $L \in I$. So 
\[
\gin(I)_2 = (x_1,x_2,x_3)^2.
\]
\noindent Since there are only 7 linear syzygies on $I_2$, it cannot be the case that the quadrics in $I$ are a Groebner basis; so in particular $\gin(I)$ must contain a cubic. Thanks to the generic change of coordinates this cubic will have lead term $x_1x_4^2$. Now we compute the Groebner stratum for this family, which entails taking the 7 lead monomials and adding on additional terms with parametric coefficients $t_j$. In the {\tt Macaulay2} computation below the ring {\tt U} has variables $x_i$ and $t_j$, {\tt CF} is the cokernel of the ideal with parametric coefficients, and {\tt H} will denote the nonminimal maps. The resolution of $A$ over $S$ arises by specializing values of $t_i$ in the resolution of {\tt CF} over {\tt U}. 
\pagebreak

{\scriptsize \begin{verbatim}
              2               2         2     2
o14 = ideal (x , x x , x x , x , x x , x , x x );
              3   2 3   1 3   2   1 2   1   1 4

i15 : F=groebnerFamily o14

              2                                 2                                       2                 
o15 = ideal (x  + t x x  + t x x  + t x x  + t x , x x  + t x x  + t x x  + t x x  + t x , x x  + t x x  +
              3    1 1 4    2 2 4    3 3 4    4 4   2 3    5 1 4    6 2 4    7 3 4    8 4   0 2    9 0 3  
      ---------------------------------------------------------------------------------------------------------
                              2   2                                     2                                      
      t  x x  + t  x x  + t  x , x  + t  x x  + t  x x  + t  x x  + t  x , x x  + t  x x  + t  x x  + t  x x  +
       10 2 4    11 3 4    12 4   2    13 1 4    14 2 4    15 3 4    16 4   1 2    17 1 4    18 2 4    19 3 4  
      ---------------------------------------------------------------------------------------------------------
          2   2                                     2     2         2         2       3
      t  x , x  + t  x x  + t  x x  + t  x x  + t  x , x x  + t  x x  + t  x x  + t  x )
       20 4   1    21 1 4    22 2 4    23 3 4    24 4   1 4    25 2 4    26 3 4    27 4

i16 : U=ring F;

i17 : T=coefficientRing U;

i18 : netList F_*

      +------------------------------------------+
      | 2                                 2      |
o18 = |x  + t x x  + t x x  + t x x  + t x       |
      | 3    1 1 4    2 2 4    3 3 4    4 4      |
      +------------------------------------------+
      |                                     2    |
      |x x  + t x x  + t x x  + t x x  + t x     |
      | 2 3    5 1 4    6 2 4    7 3 4    8 4    |
      +------------------------------------------+
      |                                        2 |
      |x x  + t x x  + t  x x  + t  x x  + t  x  |
      | 1 3    9 1 4    10 2 4    11 3 4    12 4 |
      +------------------------------------------+
      | 2                                     2  |
      |x  + t  x x  + t  x x  + t  x x  + t  x   |
      | 2    13 1 4    14 2 4    15 3 4    16 4  |
      +------------------------------------------+
      |                                         2|
      |x x  + t  x x  + t  x x  + t  x x  + t  x |
      | 1 2    17 1 4    18 2 4    19 3 4    20 4|
      +------------------------------------------+
      | 2                                     2  |
      |x  + t  x x  + t  x x  + t  x x  + t  x   |
      | 1    21 1 4    22 2 4    23 3 4    24 4  |
      +------------------------------------------+
      |   2         2         2       3          |
      |x x  + t  x x  + t  x x  + t  x           |
      | 1 4    25 2 4    26 3 4    27 4          |
      +------------------------------------------+

i21 : (CF,H)=nonminimalMaps F;

i22 : U=ring CF;

i23 : CF
            1      7      14      11      3
o23 =      U  <-- U  <-- U   <-- U   <-- U
 
o23 : ChainComplex

i24 : betti(CF, Weights=>{1})

             0 1  2  3 4
o24 = total: 1 7 14 11 3
          0: 1 .  .  . .
          1: . 6  8  3 .
          2: . 1  3  6 3
          3: . .  3  2 .

o24 : BettiTally

i25 : isHomogeneous CF

o25 = true

i26 : keys H

o26 = {(3, 4), (3, 5), (4, 6), (2, 3)}

i27 : M23=H#(2,3)

o27 = {3} | -t_15t_1+t_7t_5-t_14t_5+t_17t_5+t_6t_13-t_9t_13 -t_7t_1+t_17t_1+t_3t_5-t_6t_5-t_9t_5+t_2t_13
      ---------------------------------------------------------------------------------------------------------
      t_16-t_9t_15-t_14t_17+t_17^2+t_19t_5+t_18t_13-t_21t_13 t_8-t_7t_9-t_6t_17+t_9t_17+t_19t_1+t_18t_5-t_21t_5
      ---------------------------------------------------------------------------------------------------------
      t_8-t_7t_9-t_6t_17+t_9t_17+t_11t_5-t_21t_5+t_10t_13 t_4-t_3t_9+t_9^2-t_2t_17+t_11t_1-t_21t_1+t_10t_5
      ---------------------------------------------------------------------------------------------------------
      t_20-t_19t_9-t_18t_17+t_23t_5+t_22t_13 t_12-t_11t_9-t_10t_17+t_23t_1+t_22t_5 |

              1       8
o27 : Matrix T  <--- T

i28 : M34=H#(3,4)

o28 = {4} | t_6t_7-t_7t_9-t_2t_15-t_6t_17+t_9t_17+t_19t_1+2t_18t_5-t_21t_5-t_10t_13         
      {4} | t_7^2-t_7t_14-t_3t_15+t_6t_15+t_9t_15+t_14t_17-t_17^2-t_11t_13-t_18t_13+t_21t_13
      {4} | -t_15t_1+t_7t_5-t_14t_5+t_17t_5+t_6t_13-t_9t_13                                 
      ---------------------------------------------------------------------------------------------------------
      t_3t_6-t_6^2-t_2t_7-t_3t_9+t_9^2+t_2t_14-t_2t_17+t_11t_1+t_18t_1-t_21t_1
      -t_6t_7+t_7t_9+t_2t_15+t_6t_17-t_9t_17+t_19t_1-2t_11t_5+t_21t_5-t_10t_13
      -t_7t_1+t_17t_1+t_3t_5-t_6t_5-t_9t_5+t_2t_13                            
      ---------------------------------------------------------------------------------------------------------
      -t_19t_2+t_11t_6-t_18t_6-t_11t_9+t_18t_9+t_10t_14-2t_10t_17+t_23t_1+t_22t_5   |
      -t_19t_3+t_11t_7-t_18t_7+2t_19t_9+t_10t_15-t_11t_17+t_18t_17-t_23t_5-t_22t_13 |
      -t_19t_1+t_11t_5-t_18t_5+t_10t_13                                             |

              3       3
o28 : Matrix T  <--- T

i29 : M35=H#(3,5)

o29 = {5} | 1 0 0 0 -t_25 0     |
      {5} | 0 1 1 0 -t_26 -t_25 |
      {5} | 0 0 0 1 0     -t_26 |

              3       6
o29 : Matrix T  <--- T

i30 : M46=H#(4,6)

o30 = {6} | 1 0 -t_25 |
      {6} | 0 1 -t_26 |

              2       3
o30 : Matrix T  <--- T
\end{verbatim}
}

\renewcommand{\baselinestretch}{1.5}
\noindent The key point of the computations above is that the last two maps, denoted {\tt M35} and {\tt M46}, are the maps in the Schreyer resolution of {\tt CF}, which has betti table appearing in line {\tt o24}. The map 
\[
\Tor_3^{\tt U}({\tt CF},\KK)_5 \stackrel{\tt M35}{\longrightarrow}
\Tor_2^{\tt U}({\tt CF},\KK)_5
\]
always has rank 3, as we see from line {\tt o29}, and the map 
\[
\Tor_4^{\tt U}({\tt CF},\KK)_6 \stackrel{\tt M46}{\longrightarrow}
\Tor_3^{\tt U}({\tt CF},\KK)_6
\]
always has rank 2, as we see from line {\tt o30}. In particular, this means those maps {\em always} have maximal cancellation, resulting in the betti table below:

\renewcommand{\baselinestretch}{1}
{\scriptsize \begin{verbatim}
             0 1  2  3 4
o24 = total: 1 7  11 6 1
          0: 1 .  .  . .
          1: . 6  8  3 .
          2: . 1  3  3 1
          3: . .  .  . .
\end{verbatim}
}
In order to specialize to top row {\tt 6 7 2} there will also be cancellation in the remaining two maps {\tt M23} and {\tt M34}--on the locus with top row {\tt 6 7 2} the maps {\tt M23} and {\tt M34} both have rank one. The key point is that no cancellation is possible for $\Tor_4^{\tt U}({\tt CF},\KK)_6$, which is nonzero, and so also $\Ext^4_S(S/I_2,S)_6 \ne 0$. Applying $\Hom_S(\bullet, S)$ to the short exact sequence 
\[
0 \longrightarrow I/I_2 \longrightarrow S/I_2 \longrightarrow S/I \longrightarrow 0
\]
yields a long exact sequence in $\Ext$, which in degree 6 terminates as below:
\[
\cdots \rightarrow \Ext^3_S(I/I_2,S)_6 \rightarrow \Ext^4_S(S/I,S)_6  \rightarrow \Ext^4_S(S/I_2,S)_6 \rightarrow \Ext^4_S(I/I_2,S)_6  \rightarrow 0.
\]
Because $I/I_2$ is generated in degree $3$, this forces $\Ext^4_S(I/I_2,S)$ to be generated in degree at least seven. Our computation above shows that $\Ext^4_S(S/I_2,S)_6 \ne 0$, and therefore $\Ext^4_S(S/I,S)_6 \ne 0$, which is inconsistent with Table (3). Hence, Table (3) cannot occur for an AG ring.\end{proof}
\noindent The remaining three betti tables occuring in Theorem~\ref{43thm} may be obtained from the inverse systems of (respectively) the polynomials $y_1^2y_2+y_2y_3y_4$, $y_1^3+y_2y_3y_4$, and $y_1^2y_2+y_3^2y_4$. 
\section{Codimension four and Regularity five}
\noindent We begin by proving Theorem~\ref{54thm}:
\begin{proof} For each betti table, we give examples of ideals satisfying the theorem.
\vskip -.15in
 {\scriptsize \begin{verbatim}
      +-----------------+
      |       0 1 2  3 4|
      |total: 1 9 16 9 1|
      |    0: 1 .  . . .|
      |    1: . 3  2 . .|
      |    2: . 3  6 3 .|
      |    3: . 3  6 3 .|
      |    4: . .  2 3 .|
      |    5: . .  . . 1|
      +-----------------+
      \end{verbatim}
      }
        \vskip -.15in
\noindent ${\bullet}$ WLP holds: $\langle  x_{3}x_{4}, x_{2}x_{4},x_{3}^{2},x_{4}^{3},x_{2}^{2}x_{3}-x_{1}x_{4}^{2}, x_{2}^{3}, x_{1}^{3}x_{3}, x_{1}^{3}x_{2}, x_{1}^{4} \rangle$. \newline

\noindent ${\bullet}$ WLP fails: $\;\langle x_{4}^{2}, x_{3}x_{4}, x_{3}^{2}, x_{2}^{2}x_{4}, x_{2}^{2}x_{3}-x_{1}^{2}x_{4}, x_{1}^{2}x_{3}, x_{2}^{4}, x_{1}^{2}x_{2}^{2}, x_{1}^{4} \rangle$.
\newline 
{\scriptsize \begin{verbatim}
      +--------------------+
      |       0 1  2  3  4 |
      |total: 1 11 20 11 1 |
      |    0: 1 .  .  .  . |
      |    1: . 2  1  .  . |
      |    2: . 5  9  4  . |
      |    3: . 4  9  5  . |
      |    4: . .  1  2  . |
      |    5: . .  .  .  1 |
      +--------------------+
      \end{verbatim}
      }
\noindent ${\bullet}$ WLP holds: $\langle  x_{4}^{2},x_{3}x_{4},x_{3}^{3},x_{2}x_{3}^{2}-x_{2}^{2}x_{4},x_{1}x_{3}^{2}-x_{1}x_{2}x_{4}+x_{2}^{2}x_{4},x_{2}^{2}x_{3},\\
\mbox{     }\;\; \;\;\;\;\;\;\;\;\;\;\;\;\;\;\; \;\;\;\;\;  x_{2}^{3},x_{1}^{3}x_{4}-x_{1}^{2}x_{2}x_{4}+x_{1}x_{2}^{2}x_{4},x_{1}^{3}x_{3},x_{1}^{3}x_{2}-x_{1}^{2}x_{2}^{2},x_{1}^{4} \rangle$. \newline

\noindent ${\bullet}$ WLP fails: $\langle  x_{1}x_{4},x_{1}^{2},x_{3}x_{4}^{2},x_{2}x_{4}^{2},x_{2}^{2}x_{4},x_{1}x_{3}^{2},x_{1}x_{2}^{2}-x_{3}^{2}x_{4},x_{3}^{4},x_{2}x_{3}^{3}-x_{4}^{4},x_{2}^{2}x_{3}^{2},x_{2}^{4} \rangle$.

{\scriptsize \begin{verbatim}
      +--------------------+
      |       0 1  2  3  4 |
      |total: 1 13 24 13 1 |
      |    0: 1 .  .  .  . |
      |    1: . 1  .  .  . |
      |    2: . 7  12 5  . |
      |    3: . 5  12 7  . |
      |    4: . .  .  1  . |
      |    5: . .  .  .  1 |
      +--------------------+
      \end{verbatim}
      }
      \vskip -.1in
\noindent ${\bullet}$ WLP holds: $\langle x_{4}^{2},x_{3}^{2}x_{4},x_{2}^{2}x_{4},x_{3}^{3},x_{2}x_{3}^{2},x_{1}x_{3}^{2}-x_{2}x_{3}x_{4},x_{2}^{2}x_{3},x_{2}^{3}-x_{1}x_{3}x_{4},\\
\mbox{     }\;\; \;\;\;\;\;\;\;\;\;\;\;\;\;\;\; \;\;\;\;\;  x_{1}^{3}x_{4},x_{1}^{3}x_{3}-x_{1}^{2}x_{2}x_{4},x_{1}^{2}x_{2}^{2},x_{1}^{3}x_{2},x_{1}^{4}\ \rangle$. \newline

\noindent ${\bullet}$ WLP fails: $\langle x_{3}^{2},x_{4}^{3},x_{3}x_{4}^{2},x_{2}x_{3}x_{4},x_{1}x_{3}x_{4}-x_{2}x_{4}^{2},x_{2}^{2}x_{4},x_{2}^{2}x_{3}-x_{1}x_{4}^{2},\\
\mbox{     }\;\; \;\;\;\;\;\;\;\;\;\;\;\;\;\;\; \;\;\;\;\;  x_{1}^{2}x_{3}-x_{1}x_{2}x_{4},x_{2}^{4},x_{1}x_{2}^{3}-x_{1}^{3}x_{4},x_{1}^{2}x_{2}^{2},x_{1}^{3}x_{2},x_{1}^{4} \rangle$.
\vskip -.2in
{\scriptsize \begin{verbatim}
      +--------------------+
      |       0 1  2  3  4 |
      |total: 1 16 30 16 1 |
      |    0: 1 .  .  .  . |
      |    1: . .  .  .  . |
      |    2: . 10 15 6  . |
      |    3: . 6  15 10 . |
      |    4: . .  .  .  . |
      |    5: . .  .  .  1 |
      +--------------------+
      \end{verbatim}
      }
      \vskip -.1in
      \noindent ${\bullet}$ WLP holds: for the doubling $($see \cite{KKRSSY}, \S 2.5$)$ of the ideal of $3 \times 3$ minors of a $3 \times 5$ matrix of generic linear forms. The equations are large and unenlightening, so we do not include them here. \newline
      
\noindent ${\bullet}$ WLP fails: $\langle x_{2}x_{4}^{2},x_{1}x_{4}^{2},x_{2}x_{3}x_{4},x_{1}x_{3}x_{4},x_{1}^{2}x_{4},x_{2}x_{3}^{2},x_{1}x_{3}^2,x_2^2x_3,x_{1}^{2}x_{3}-x_{2}^2x_{4},\\
\mbox{     }\;\; \;\;\;\;\;\;\;\;\;\;\;\;\;\;\; \;\;\;\;\;  x_{1}x_{2}^{3}-x_3^3x_4,x_1^2x_2^2,x_{1}^{3}x_{2}-x_{3}^{2}x_{4}^{2},x_{2}^{3}x_{3}-x_{1}^{3}x_{4},x_{1}^{4},x_{2}^{4},x_{3}^{4},x_{4}^{3}\rangle$.\newline

\noindent This is the inverse system for Ikeda's example:
$F=y_1^3y_2y_3+y_1y_2^3y_4+y_3^3y_4^2$. 
\end{proof}
\subsection{Connection to Jordan Type}
One property of Artinian $\KK$-algebras that has generated much investigation is the Jordan type.
\begin{defn}
	Let $A=\bigoplus_{k\geq 0} A_k$ be a graded Artinian algebra and $\ell \in A_1$ a linear form. The Jordan type of $A$ with respect to $\ell$ is the partition of $\dim_\mathbb{K}A$ denoted by $P_\ell=P_{\ell,A}=(p_1,\dots,p_s)$ where $p_1\geq p_2\ge \cdots \ge p_s$ and $p_i$'s are the block sizes in the Jordan canonical form matrix of the multiplication by $\ell$. 
\end{defn}

\begin{prop}[see 3.5 of \cite{HMMNWW} or 2.11 of \cite{IMM}] 
If the Hilbert function of a graded Artinian algebra is unimodal and symmetric, then $A$ has WLP iff the maximal value of $h_i(A)$ is equal to the length of the partition for a generic $\ell$.
\end{prop}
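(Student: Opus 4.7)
The proposition compares two invariants attached to multiplication by a generic linear form $\ell$: the length $s$ of the Jordan partition $P_\ell$, and the peak value $\max_i h_i$ of the Hilbert function. The plan is to count $\dim_\KK \ker \mu_\ell$ in two different ways and compare, then observe that the inequality so produced is sharp precisely when WLP holds.

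First I would recall the basic fact that for a nilpotent endomorphism of a finite-dimensional vector space, the number of Jordan blocks equals the dimension of the kernel. Since $\mu_\ell : A \to A$ is a graded map of degree one, the kernel decomposes as $\ker \mu_\ell = \bigoplus_i \ker(\mu_\ell|_{A_i})$, and so
\[
s \;=\; \dim_\KK \ker \mu_\ell \;=\; \sum_{i} \dim_\KK \ker(\mu_\ell|_{A_i}).
\]
By rank-nullity applied to each graded piece, $\dim_\KK \ker(\mu_\ell|_{A_i}) \geq \max(0,\,h_i - h_{i+1})$, with equality precisely when $\mu_\ell|_{A_i}$ attains its maximum possible rank $\min(h_i, h_{i+1})$.

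The next step is a telescoping argument using unimodality. Let $m$ be the largest index with $h_m = \max_i h_i$. Unimodality forces $h_i - h_{i+1} \leq 0$ for $i < m$, so those terms contribute $0$ on the right, while for $i \geq m$ one has $h_i - h_{i+1} \geq 0$ and
\[
\sum_{i \geq m}(h_i - h_{i+1}) \;=\; h_m - h_{r+1} \;=\; \max_i h_i,
\]
since $A_{r+1} = 0$. Combining with the previous step, $s \geq \max_i h_i$, with equality if and only if every graded piece $\mu_\ell|_{A_i}$ has maximum rank.

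Finally, I would note that the locus of $\ell \in A_1$ for which all $\mu_\ell|_{A_i}$ simultaneously attain maximum rank is Zariski open, so it is either empty or contains a generic $\ell$; thus WLP is equivalent to the equality $s = \max_i h_i$ being achieved by a generic linear form, which is exactly the statement of the proposition. The main content is the observation that block count equals nullity; the remainder is a telescoping sum that requires only unimodality (symmetry of $h$ is not actually used), so I do not anticipate any serious obstacle.
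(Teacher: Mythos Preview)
Your argument is correct. The paper does not supply its own proof of this proposition: it is stated with a reference to \cite[3.5]{HMMNWW} and \cite[2.11]{IMM}, and the text moves on immediately to Computation~\ref{comp54}. So there is no in-paper proof to compare against; your write-up would serve as a self-contained replacement for the citation.

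A few remarks on the details. The identification $s = \dim_\KK \ker \mu_\ell$ is the standard fact that the number of Jordan blocks of a nilpotent endomorphism equals the nullity, and nilpotence of $\mu_\ell$ is guaranteed because $A$ is Artinian graded and $\ell$ has positive degree. Your rank--nullity bound $\dim_\KK \ker(\mu_\ell|_{A_i}) \geq \max(0,\, h_i - h_{i+1})$ and the telescoping over $i \geq m$ are both fine, and the passage from ``some $\ell$'' to ``generic $\ell$'' via openness of the maximal-rank locus is handled correctly.

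Your closing observation is also accurate: the argument uses only unimodality, not symmetry. The hypothesis of symmetry in the statement is presumably inherited from the context in \cite{HMMNWW} and \cite{IMM}, where the results are formulated for Gorenstein (or more generally self-dual) algebras whose $h$-vectors are automatically symmetric; but as you note, it plays no role in the equivalence itself.
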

\noindent For the algebras appearing in Theorem~\ref{54thm}, we have the following:
\begin{comp}\label{comp54}
The Jordan decompositions for $S/I$ as in Theorem~\ref{54thm} are
\begin{enumerate}
\item When $I$ has 3 quadrics \newline
        \vskip -.1in
\noindent ${\bullet}$ If WLP holds, the Jordan type is  $\{6, 4, 4, 4, 2, 2, 2\}$.\newline 
\noindent ${\bullet}$ If WLP fails, the Jordan type is $\{6, 4, 4, 4, 2, 2, 1, 1\}$.
\vskip .1in
\item When $I$ has 2 quadrics \newline
        \vskip -.15in
\noindent ${\bullet}$ If WLP holds, the Jordan type is $\{6, 4, 4, 4, 2, 2, 2, 2\}$.\newline
\noindent ${\bullet}$ If WLP fails, the Jordan type is $\{6, 4, 4, 4, 2, 2, 2, 1, 1\}$.\newline

\item  \vskip -.10in When $I$ has 1 quadric \newline
        \vskip -.15in
\noindent ${\bullet}$ If WLP holds, the Jordan type is $\{6, 4, 4, 4, 2, 2, 2, 2, 2\}$.\newline
\noindent ${\bullet}$ If WLP fails, the Jordan type is $\{6, 4, 4, 4, 2, 2, 2, 2, 1, 1\}$.\newline
  \vskip -.15in
\item \vskip -.12inWhen $I$ has no quadrics \newline
        \vskip -.15in
\noindent ${\bullet}$ If WLP holds, the Jordan type is $\{6, 4, 4, 4, 2, 2, 2, 2, 2, 2\}$.\newline
\noindent ${\bullet}$ If WLP fails, the Jordan type is $\{6, 4, 4, 4, 2, 2, 2, 2, 2, 1, 1\}$.
\end{enumerate}
\end{comp}
\vskip -.04in
\noindent Proposition 2.10 of \cite{IMM} shows that $A$ has SLP iff the partition $P_{A,\ell}$ is conjugate to the $H$-vector. Hence the examples in Computation~\ref{comp54} have SLP if they have WLP.  This has been proven recently in \cite{A}. By  \cite{AAISY}, SLP holds for an AG algebra $A$ with $c=3$ if all $h_i(A) \le 6$, which also seems to hold for $A$ with $c=4$ and $r=5$.
\pagebreak

\subsection{Conjecture~\ref{conjWLPbetti}} 
\noindent {\tt Macaulay2} computations suggest that the betti tables possible for an AG algebra with $c=4$ and $r=5$ are those below, which is (part of) the content of Conjecture~\ref{conjWLPbetti}:
\vskip .1in
\begin{changemargin}{0.1cm}{0.1cm}
{\tiny \begin{verbatim}
  +-------------------+-----------------+-------------------+-------------------+-------------------+-------------------+
      |       0 1  2 3 4  |       0 1  2 3 4|       0 1  2 3 4  |       0 1  2 3 4  |       0 1  2 3 4  |       0 1  2 3 4  |
      |total: 1 9 16 9 1  |total: 1 6 10 6 1|total: 1 9 16 9 1  |total: 1 7 12 7 1  |total: 1 9 16 9 1  |total: 1 6 10 6 1  |
      |    0: 1 .  . . .  |    0: 1 .  . . .|    0: 1 .  . . .  |    0: 1 .  . . .  |    0: 1 .  . . .  |    0: 1 .  . . .  |
      |    1: . 6  8 3 .  |    1: . 5  5 . .|    1: . 5  6 2 .  |    1: . 4  3 . .  |    1: . 4  4 1 .  |    1: . 4  2 . .  |
      |    2: . .  . . .  |    2: . .  . 1 .|    2: . 1  2 1 .  |    2: . 1  3 2 .  |    2: . 2  4 2 .  |    2: . .  3 2 .  |
      |    3: . .  . . .  |    3: . 1  . . .|    3: . 1  2 1 .  |    3: . 2  3 1 .  |    3: . 2  4 2 .  |    3: . 2  3 . .  |
      |    4: . 3  8 6 .  |    4: . .  5 5 .|    4: . 2  6 5 .  |    4: . .  3 4 .  |    4: . 1  4 4 .  |    4: . .  2 4 .  |
      |    5: . .  . . 1  |    5: . .  . . 1|    5: . .  . . 1  |    5: . .  . . 1  |    5: . .  . . 1  |    5: . .  . . 1  |
      +-------------------+-----------------+-------------------+-------------------+-------------------+-------------------+
      |       0 1  2 3 4  |       0 1  2 3 4|       0 1  2 3 4  |       0  1  2  3 4|       0  1  2  3 4|       0 1 2 3 4   |
      |total: 1 9 16 9 1  |total: 1 8 14 8 1|total: 1 6 10 6 1  |total: 1 11 20 11 1|total: 1 10 18 10 1|total: 1 4 6 4 1   |
      |    0: 1 .  . . .  |    0: 1 .  . . .|    0: 1 .  . . .  |    0: 1  .  .  . .|    0: 1  .  .  . .|    0: 1 . . . .   |
      |    1: . 3  2 . .  |    1: . 3  1 . .|    1: . 3  2 . .  |    1: .  2  1  . .|    1: .  2  .  . .|    1: . 3 . . .   |
      |    2: . 3  6 3 .  |    2: . 2  6 3 .|    2: . 3  3 . .  |    2: .  5  9  4 .|    2: .  4  9  4 .|    2: . 1 3 . .   |
      |    3: . 3  6 3 .  |    3: . 3  6 2 .|    3: . .  3 3 .  |    3: .  4  9  5 .|    3: .  4  9  4 .|    3: . . 3 1 .   |
      |    4: . .  2 3 .  |    4: . .  1 3 .|    4: . .  2 3 .  |    4: .  .  1  2 .|    4: .  .  .  2 .|    4: . . . 3 .   |
      |    5: . .  . . 1  |    5: . .  . . 1|    5: . .  . . 1  |    5: .  .  .  . 1|    5: .  .  .  . 1|    5: . . . . 1   |
      +-------------------+-----------------+-------------------+-------------------+-------------------+-------------------+
      |       0  1  2  3 4|       0 1  2 3 4|       0  1  2  3 4|       0 1  2 3 4  |       0 1  2 3 4  |       0 1  2 3 4  |
      |total: 1 11 20 11 1|total: 1 7 12 7 1|total: 1 13 24 13 1|total: 1 8 14 8 1  |total: 1 7 12 7 1  |total: 1 7 12 7 1  |
      |    0: 1  .  .  . .|    0: 1 .  . . .|    0: 1  .  .  . .|    0: 1 .  . . .  |    0: 1 .  . . .  |    0: 1 .  . . .  |
      |    1: .  3  3  1 .|    1: . 2  . . .|    1: .  1  .  . .|    1: . 2  1 . .  |    1: . 3  2 . .  |    1: . 4  4 1 .  |
      |    2: .  4  7  3 .|    2: . 4  6 1 .|    2: .  7 12  5 .|    2: . 5  6 1 .  |    2: . 3  4 1 .  |    2: . 2  2 . .  |
      |    3: .  3  7  4 .|    3: . 1  6 4 .|    3: .  5 12  7 .|    3: . 1  6 5 .  |    3: . 1  4 3 .  |    3: . .  2 2 .  |
      |    4: .  1  3  3 .|    4: . .  . 2 .|    4: .  .  .  1 .|    4: . .  1 2 .  |    4: . .  2 3 .  |    4: . 1  4 4 .  |
      |    5: .  .  .  . 1|    5: . .  . . 1|    5: .  .  .  . 1|    5: . .  . . 1  |    5: . .  . . 1  |    5: . .  . . 1  |
      +-------------------+-----------------+-------------------+-------------------+-------------------+-------------------+
      |       0 1  2 3 4  |       0 1  2 3 4|       0 1  2 3 4  |       0  1  2  3 4|       0 1  2 3 4  |       0  1  2  3 4|
      |total: 1 6 10 6 1  |total: 1 9 16 9 1|total: 1 9 16 9 1  |total: 1 10 18 10 1|total: 1 7 12 7 1  |total: 1 11 20 11 1|
      |    0: 1 .  . . .  |    0: 1 .  . . .|    0: 1 .  . . .  |    0: 1  .  .  . .|    0: 1 .  . . .  |    0: 1  .  .  . .|
      |    1: . 3  1 . .  |    1: . 2  1 . .|    1: . 3  3 1 .  |    1: .  1  .  . .|    1: . 2  1 . .  |    1: .  1  .  . .|
      |    2: . 2  4 1 .  |    2: . 5  7 2 .|    2: . 4  5 1 .  |    2: .  7  9  2 .|    2: . 5  5 . .  |    2: .  7 10  3 .|
      |    3: . 1  4 2 .  |    3: . 2  7 5 .|    3: . 1  5 4 .  |    3: .  2  9  7 .|    3: . .  5 5 .  |    3: .  3 10  7 .|
      |    4: . .  1 3 .  |    4: . .  1 2 .|    4: . 1  3 3 .  |    4: .  .  .  1 .|    4: . .  1 2 .  |    4: .  .  .  1 .|
      |    5: . .  . . 1  |    5: . .  . . 1|    5: . .  . . 1  |    5: .  .  .  . 1|    5: . .  . . 1  |    5: .  .  .  . 1|
      +-------------------+-----------------+-------------------+-------------------+-------------------+-------------------+
      |       0 1  2 3 4  |       0 1  2 3 4|       0  1  2  3 4|       0 1  2 3 4  |       0  1  2  3 4|       0  1  2  3 4|
      |total: 1 6 10 6 1  |total: 1 9 16 9 1|total: 1 14 26 14 1|total: 1 8 14 8 1  |total: 1 13 24 13 1|total: 1 12 22 12 1|
      |    0: 1 .  . . .  |    0: 1 .  . . .|    0: 1  .  .  . .|    0: 1 .  . . .  |    0: 1  .  .  . .|    0: 1  .  .  . .|
      |    1: . 2  . . .  |    1: . 1  . . .|    1: .  .  .  . .|    1: . 1  . . .  |    1: .  .  .  . .|    1: .  .  .  . .|
      |    2: . 4  5 . .  |    2: . 7  8 1 .|    2: . 10 13  4 .|    2: . 7  7 . .  |    2: . 10 12  3 .|    2: . 10 11  2 .|
      |    3: . .  5 4 .  |    3: . 1  8 7 .|    3: .  4 13 10 .|    3: . .  7 7 .  |    3: .  3 12 10 .|    3: .  2 11 10 .|
      |    4: . .  . 2 .  |    4: . .  . 1 .|    4: .  .  .  . .|    4: . .  . 1 .  |    4: .  .  .  . .|    4: .  .  .  . .|
      |    5: . .  . . 1  |    5: . .  . . 1|    5: .  .  .  . 1|    5: . .  . . 1  |    5: .  .  .  . 1|    5: .  .  .  . 1|
      +-------------------+-----------------+-------------------+-------------------+-------------------+-------------------+
      |       0  1  2  3 4|       0 1  2 3 4|       0 1  2 3 4  |       0 1  2 3 4  |       0  1  2  3 4|       0  1  2  3 4|
      |total: 1 11 20 11 1|total: 1 7 12 7 1|total: 1 7 12 7 1  |total: 1 8 14 8 1  |total: 1 10 18 10 1|total: 1 16 30 16 1|
      |    0: 1  .  .  . .|    0: 1 .  . . .|    0: 1 .  . . .  |    0: 1 .  . . .  |    0: 1  .  .  . .|    0: 1  .  .  . .|
      |    1: .  .  .  . .|    1: . 5  5 1 .|    1: . 3  . . .  |    1: . 2  . . .  |    1: .  .  .  . .|    1: .  .  .  . .|
      |    2: . 10 10  1 .|    2: . .  1 1 .|    2: . 1  6 3 .  |    2: . 4  7 2 .  |    2: . 10  9  . .|    2: . 10 15  6 .|
      |    3: .  1 10 10 .|    3: . 1  1 . .|    3: . 3  6 1 .  |    3: . 2  7 4 .  |    3: .  .  9 10 .|    3: .  6 15 10 .|
      |    4: .  .  .  . .|    4: . 1  5 5 .|    4: . .  . 3 .  |    4: . .  . 2 .  |    4: .  .  .  . .|    4: .  .  .  . .|
      |    5: .  .  .  . 1|    5: . .  . . 1|    5: . .  . . 1  |    5: . .  . . 1  |    5: .  .  .  . 1|    5: .  .  .  . 1|
      +-------------------+-----------------+-------------------+-------------------+-------------------+-------------------+
      \end{verbatim}
      }
     \end{changemargin}
         
\subsection{WLP and SLP for $c=4$ and $r=6$} The Strong Lefschetz Property does not always hold when $c=4$ and $r=6$: the inverse system of $F=y_1^2y_4(y_1^2y_3+y_2y_4^2)$ has Hilbert function $(1,4,7,8,7,4,1)$ and Jordan type $\{7, 5, 5, 5, 3, 3, 2, 2\}$, so by Proposition 2.10 of \cite{IMM} cannot satisfy SLP. Gondim shows in Theorem 3.8 of \cite{G} that there exist examples where WLP fails for $c=4$ and all $r \ge 7$. For $c=4$ and $r=6$, we found no examples of failure of WLP. Since WLP fails for $r=5$ and $r \ge 7$, it would be interesting to investigate if indeed WLP always holds for $r=6$.

\begin{rmk}
	In \cite{MVW}, Macias Marques, Veliche and Weyman prove that all codimension 4 and regularity 3 AG rings come from the doubling construction. As a consequence they also obtain the betti table classification of Theorem \ref{43thm}.
\end{rmk}
\vskip .05in
\noindent{\bf Acknowledgments} All of the computations in this paper were performed using the {\tt QuaternaryQuartics} \cite{KKRSSY} package of Macaulay2, by Grayson and Stillman, which is available at: {\tt http://www.math.uiuc.edu/Macaulay2/}.
Our collaboration began at the 2019 CIRM workshop ``Lefschetz properties in algebra, geometry, and combinatorics'', organized by A. Dimca, R. Mir\'o-Roig, and J. Vall\`es, and we thank them and CIRM for a great workshop.  We thank Nasrin Altafi, Tony Iarrobino, Juan Migliore, and two anonymous referees for helpful comments.
\vskip -.01in
\bibliographystyle{amsalpha}

\begin{thebibliography}{10}
	
\bibitem{A} N. Abdallah, A note Artin Gorenstein algebras with Hilbert function $(1,4,k,k,4,1)$, {\tt arXiv:2302.14797}.
	
\bibitem{AAISY} N. Abdallah, N. Altafi, A. Iarrobino, A. Seceleanu, J. Yaméogo, Lefschetz properties of some codimension three Artinian Gorenstein algebras, 
{\em J. Algebra}, {\bf 625} (2023), 28-45.

\bibitem{BMMRNZ} M. Boij, J. Migliore, R. Mir\'o-Roig, U. Nagel, F. Zanello, 
On the shape of a pure O-sequence,
{\em Mem. Amer. Math. Soc.}, {\bf 218} (2012).

\bibitem{BMMNZ2} M. Boij, J. Migliore, R. Mir\'o-Roig, U. Nagel and F. Zanello, On the Weak
Lefschetz Property for Artinian Gorenstein algebras of codimension three, {\em J.
Algebra} {\bf 403} (2014), 48--68.

\bibitem{BZ} M. Boij, F. Zanello, Level algebras with bad properties,
{\em Proc. Amer. Math. Soc.}, {\bf 135} (2007), 2713--2722.

\bibitem{CN} D.~Cook, U.~Nagel,
             The weak Lefschetz property for monomial ideals of small type,
             {\em J. Algebra}  {\bf 462} (2016), 285--319.

\bibitem{e} D.~Eisenbud,
               Commutative Algebra with a view towards Algebraic Geometry,
               Springer GTM  {\bf 150}, 1995.

\bibitem{G} R.~Gondim, 
On higher Hessians and the Lefschetz properties. 
{\em J. Algebra}, {\bf 489} (2017), 241--263.

\bibitem{GZ} R.~Gondim, G.~Zappal\'a,
             Lefschetz properties for Artinian Gorenstein algebras presented by quadrics,
             {\em Proc. Amer. Math. Soc.} {\bf 146} (2018), 993-1003.

\bibitem{green} M.~Green,
                 Generic initial ideals,
                 {\em Progr. in Math}, {\bf 166} (1998), 119-186.

\bibitem{HMMNWW} T. Harima, T. Maeno, H. Morita, Y. Numata, A. Wachi, J. Watanabe, 
The Lefschetz properties,
{\em Springer Lecture Notes in Mathematics}, {\bf  2080}, 
Springer, Heidelberg, 2013. 

\bibitem{HMNW} T.~Harima, J.~Migliore, U.~Nagel, J.~Watanabe,
             The weak and strong Lefschetz properties for Artinian $\KK$-algebras.
             {\em J. Algebra}, {\bf 262} (2003), 99--126.

\bibitem{Ianc} A.~Iarrobino, 
 Ancestor ideals of vector spaces of forms, and level algebras.
 {\em J. Algebra}, {\bf 272} (2004), 530-580.
 
\bibitem{IMM} A.~Iarrobino, P.~Macias Marques, C.~McDaniel,
              Artinian algebras and Jordan type. 
              {\em J. Commut. Algebra},  {\bf 14} (2022), 553-569.

\bibitem{Ikeda}  H.~Ikeda,
Results on Dilworth and Rees numbers of Artinian local rings. 
{\em Japan. J. Math. (N.S.)}, {\bf 22} (1996),  147--158.

\bibitem{KKRSSY} M.~Kapustka, G.~Kapustka, K.~Ranestad, H.~Schenck, M.~Stillman, B.~Yuan, 
Quaternary Quartic forms and Gorenstein rings,
preprint, {\tt arXiv https://arxiv.org/abs/2111.05817}.

\bibitem{M} F.S.~Macaulay, 
Some properties of enumeration in the theory of modular systems,
{\em Proc. London Math. Soc. (2)}, {\bf 26} (1927), 531--555.

\bibitem{MVW} P. Macias Marques, Oana Veliche, Jerzy Weyman, Artinian Gorenstein algebras of embedding dimension four and socle degree three, {\tt arXiv:2212.05444}.
\bibitem{MN} J.~Migliore, U.~Nagel,
             A tour of the strong and weak Lefschetz properties,
             {\em J. Comm. Alg.}, {\bf 5} (2013), 329--358.

\bibitem{MMRN} J.~Migliore, R. Miro-Roig, U.~Nagel,
             Monomial ideals,almost complete intersections and the Weak Lefschetz Property, 
             {\em Trans. Amer. Math. Soc.} {\bf 363} (2011), 229-257.
             
\bibitem{MN2} J.~Migliore, U.~Nagel,
             Gorenstein algebras presented by quadrics,
             {\em Collect. Math.}, {\bf 62} (2013), 211--233. 

\bibitem{MZ} J. Migliore, F. Zanello,
        Stanley's nonunimodal Gorenstein h-vector is optimal,
        {\em Proc. Amer. Math. Soc.}, {\bf 145} (2017), 1-9.
 
\bibitem{cag} H.~Schenck,
             Computational Algebraic Geometry,
             {\em Cambridge University Press}, 2003.

\bibitem{SSY} H.~Schenck, M.~Stillman, B.~Yuan, 
Calabi-Yau threefolds in $\PP^n$ and Gorenstein rings,
{\em Advances in Theoretical and Mathematical Physics}, {\bf 26}, (2022), 764-792.

\bibitem{FOS} F.-O. Schreyer,
Die Berechnung von Syzygien mit dem verallgemeinerten Weierstra{\ss}schen Divisionssatz und eine Anwendung auf analytische Cohen-Macaulay Stellenalgebren
minimaler Multiplizit\"at,
{\em MS thesis, Universit\"at Hamburg}, ProQuest LLC, Ann Arbor, MI, 1980.

\bibitem{S2} R.~Stanley, 
Hilbert functions of graded algebras,
{\em Advances in Math} {\bf 28} (1978), 57-83.

\bibitem{Stan} R.~Stanley,
             Weyl groups, the hard Lefschetz theorem, and the Sperner property,
             {\em  SIAM J. Algebraic Discrete Methods}, {\bf  1}  (1980), 168--184.             

\end{thebibliography}
\vskip -.6in

\end{document}